\newcommand{\norme}[1]{\left\Vert #1\right\Vert}
\newtheorem{Lemma}{Lemma}[section]
\newtheorem{Prop}{Proposition}[section]  
\newtheorem{Rmk}{Remark}[section]
\newtheorem{Thm}{Theorem}[section]
\theoremstyle{remark}
\newcommand{\be}{\begin{equation}}
\newcommand{\ee}{\end{equation}}
\newcommand{\ba}{\begin{array}}
\newcommand{\ea}{\end{array}}
\newcommand{\bea}{\begin{eqnarray}}
\newcommand{\eea}{\end{eqnarray}}
\newcommand{\bee}{\begin{eqnarray*}}
\newcommand{\eee}{\end{eqnarray*}}
\renewcommand{\div}{\mbox{\rm div}\;\!}
\newcommand{\R} {\mathbb{R}}
\newcommand{\cC} {\mathcal{C}}
\def\var{\varepsilon}
\def\dZ_1{\delta\!Z_1}
\def\d{\partial}
\def\ddj{\dot\Delta_j}
\title[Relaxation limit of the Jin-Xin system]{Diffusive relaxation limit of the multi-dimensional hyperbolic Jin-Xin system}
\author[T. Crin-Barat]{Timothée Crin-Barat${^*}$}
\address[T. Crin-Barat]{Chair for Dynamics,
Control, and Numerics (Alexander von Humboldt
Professorship), Department of Data
Science, Friedrich-Alexander University Erlangen-Nuremberg, Erlangen,
Germany}
\email{timothee.crin-barat@fau.de}
\author[L-Y. Shou]{Ling-Yun Shou}
\address[L-Y. Shou]{Department of Mathematics, Nanjing University of Aeronautics and Astronautics, Nanjing, 211106, P. R. China.}
\email{shoulingyun11@gmail.com}
\subjclass[201,0]{35Q35; 76N10}
\keywords{Jin-Xin approximation, relaxation limit, global existence, Besov spaces\\\quad$^*$ Corresponding author: timotheecrinbarat@gmail.com}
\begin{document}
\maketitle
\begin{abstract}
We study the diffusive relaxation limit of the Jin-Xin system toward viscous conservation laws in the multi-dimensional setting. For initial data being small perturbations of a constant state in suitable homogeneous Besov norms, we prove the global well-posedness of strong solutions satisfying uniform estimates with respect to the relaxation parameter. Then, we justify the strong relaxation limit and exhibit an explicit convergence rate of the process. Our proof is based on an adaptation of the techniques developed in \cite{c1,c2} to be able to deal with additional low-order nonlinear terms.
\end{abstract}



\section{Introduction}
Hyperbolic relaxation systems arise in a wide variety of physical situations, ranging from  non-equilibrium gas dynamics, flood flows, water waves, kinetic theory, etc \cite{cercignani1,vicenti1,whitham1}. In the present paper, we investigate the multi-dimensional Jin-Xin relaxation system under the diffusive scaling:
\begin{equation} \label{JXD}
\left\{
    \begin{aligned}
&\frac{\partial} {\partial t}u+\sum_{i=1}^{d} \frac{\partial} {\partial x_{i}} v_{i}=0, \\
&\varepsilon^2\frac{\partial} {\partial t} v_{i}+A_{i}\frac{\partial} {\partial x_{i}} u=-\big(v_{i}-f_{i}(u) \big),\quad\quad i=1,2,...,d,
    \end{aligned}
    \right.
\end{equation}
where $\varepsilon>0$ is the relaxation parameter, $t>0$ denotes the time variable, and $x\in\mathbb{R}^{d}$ stands for the space variable. The unknowns are $u=u(x,t)\in \mathbb{R}^{n}$ and $v=(v_{1},v_{2},...,v_{d})$ with $v_{i}=v_{i}(x,t)\in \mathbb{R}^{n}$. 

We are interested in the existence of global-in-time strong solutions $(u,v)$ of \eqref{JXD} that are close to constant equilibrium $(\bar{u},\bar{v})$ with $\bar{u},\bar{v}\geq0$. The constant coefficient matrices $A_{i}$ are taken as
\begin{align}
 A_{i}=a_{i} I_{n},\quad\quad a_{i}>0,\quad\quad i=1,2,...,d,   \label{AssumptionA}
\end{align}
and the nonlinear term $f(u)=(f_{1}(u),f_{2}(u),...,f_{m}(u))$ with $f_{i}(u)\in \mathbb{R}^{n}$ is assumed to be smooth in $u$ and satisfy
\begin{align}
 f_{i}(\bar{u})=\bar{v},\quad\quad f_{i}'(\bar{u})=0,\quad\quad i=1,2,...,d,\label{Assumptionf}
\end{align}
where the first equality of \eqref{Assumptionf} ensures that $(\bar{u},\bar{v})$ is a solution of \eqref{JXD} and the second ensures that $f(u)$ is \textit{at least quadratic}.

Formally, as the relaxation parameter $\varepsilon$ tends to $0$, the solution $(u,v)$ converges to $(u^*,v^{*})$, where $u^{*}$ solves the viscous conservation laws
\begin{equation}
\frac{\partial} {\partial t}u^{*}+\sum_{i=1}^{d} \frac{\partial} {\partial x_{i}}f_{i}(u^{*})=\sum_{i=1}^{d} \frac{\partial} {\partial x_{i}} (A_{i}\frac{\partial} {\partial x_{i}} u^{*}),\label{uheat}
\end{equation}
and $v^*$ is given by
\begin{equation}
v_{i}^*=-A_{i}\frac{\partial} {\partial x_{i}} u^*+f_{i}(u^*),\quad\quad i=1,2,...,d.\label{Darcy}
\end{equation}
\indent This relaxation procedure can be interpreted as a  finite speed of propagation approximation as one directly sees that the hyperbolicity of \eqref{JXD} ensures the finite speed of propagation while the second-order term in \eqref{uheat} implies an infinite speed of propagation.
Moreover, such relaxation procedure has practical importance in numerical analysis as it reduces the constraints and therefore simplifies the computation of solutions, cf. \cite{LiuViscous} and references therein for more details. 
\medbreak
The relaxation of hyperbolic systems was first analysed by the fundamental work of Liu \cite{liu1}. Then, System \eqref{JXD} was introduced by Jin and Xin in \cite{jin1}  to approximate hyperbolic conservation laws and has been intensively studied since then. Let us now point out that there are two types of Jin-Xin approximations:

\begin{itemize}
 \item The diffusive scaling depicted above in \eqref{JXD} that is used to approximate the $d$-dimensional viscous conservation laws \eqref{uheat}(cf. \cite{jin0}).
    \item And the hyperbolic scaling that reads:
    \begin{equation} \label{JXDHyp}
\left\{
    \begin{aligned}
&\frac{\partial} {\partial t}u+\sum_{i=1}^{d} \frac{\partial} {\partial x_{i}} v_{i}=0, \\
&\frac{\partial} {\partial t} v_{i}+A_{i}\frac{\partial} {\partial x_{i}} u=-\frac{1}{\var}\big(v_{i}-f_{i}(u) \big),\quad\quad i=1,2,...,d,
    \end{aligned}
    \right.
\end{equation}
which serves as an approximation of the following $d$-dimensional $n\times n$ hyperbolic conservation law (cf. \cite{jin1}):
\begin{align}\label{CL}
\frac{\partial} {\partial t}u^{*}+\sum_{i=1}^{d} \frac{\partial} {\partial x_{i}}f_{i}(u^{*})=0.
\end{align}
  
\end{itemize}
\noindent
An explicit example of \eqref{JXDHyp} in two dimensions, for which a numerical analysis has been performed in \cite{jin1}, reads
\begin{equation} \label{3compEx}
\left\{
    \begin{aligned}
&\frac{\partial} {\partial t}u+\frac{\partial}{\partial x_1}v_1+\frac{\partial}{\partial x_2}v_2=0
\\&\frac{\partial} {\partial t}v_1+A_1\frac{\partial}{\partial x_1}u=-\frac{1}{\var}\big(v_1-f_1(u) \big),
\\&\frac{\partial} {\partial t}v_2+A_2\frac{\partial}{\partial x_2}u=-\frac{1}{\var}\big(v_2-f_2(u) \big).
    \end{aligned}
    \right.
\end{equation}
\bigbreak

In the present paper, we focus on the diffusive scaled version of Jin-Xin system \eqref{JXD}. One can expect the hyperbolic scaled version to be treatable in a similar fashion but only for local-in-time solution. This comes from the fact that solutions of \eqref{CL} supplemented with 
smooth data  might develop singularities (shock waves) in finite time even if the initial data are small  perturbations of an equilibrium state (for instance, the works by Majda in \cite{Majda} and Serre in \cite{Serre}).

\vspace{2ex}

The one-dimensional Jin-Xin system is well understood, refer to the non-exhaustive literature \cite{bianchini0,bianchini1,chern1,jin0,jin2,liuh1,liuh2,mascia1,natalni1,orive1,serre1} and the references therein. For $\var=1$, Chern \cite{chern1} studied the time-asymptotical stability of the diffusive waves for the Jin-Xin system in the real line by virtue of the Chapman-Enskog expansion, Mei and Rubino \cite{mei1} got the both algebraic and exponential time-convergence rates of solutions to the Jin-Xin system toward the traveling wave solution in the half line, and Orive and Zuazua \cite{orive1} obtained the algebraic time-decay rates of solutions to the Jin-Xin system in the real line by reformulating the equation in a damped wave equation in the case $f'(0)=0$ and further investigated the finite-time blow-up phenomenon. Under the hyperbolic scaling, Natalini \cite{natalni1} proved the relaxation limit of the Jin-Xin system in any finite time-interval. Under the diffusive scaling, Jin and Liu \cite{jin0} justified the diffusive relaxation limit of the Jin-Xin system for initial data around a traveling wave solution, and Bouchut, Guarguaglini and Natalini \cite{bouchut1} considered the diffusive relaxation process of the Jin-Xin model in terms of BGK type approximations. Recently, Bianchini \cite{bianchini1} derived the sharp time-decay estimates of solutions to the Jin-Xin system in the case $f'(0)\ne0$ which are uniform in the relaxation parameter and provides the convergence to a nonlinear heat equation both asymptotically in time and in the relaxation limit. For complete reviews on the Jin-Xin system, refer to \cite{mascia1,natalni2}.


However, to the best of our knowledge, the relaxation limit of the multi-dimensional Jin-Xin system \eqref{JXD} has not yet been justified despite its relevance. The purpose of the present paper is to provide such justification in the framework of homogeneous Besov spaces. Let us stress here that the use of such a functional framework, somewhat complex, is not only to obtain optimal results in terms of regularity but it also allows to reflect perfectly the frequency-dependent behaviors of the system and appears to be very handy to justify the relaxation limit. In particular, a similar approach in the classical Sobolev framework does not lead to explicit convergence rates.


\vspace{1ex}

To be more precise, we first show a global-in-time well-posedness result to the Cauchy problem for System \eqref{JXD} for small initial perturbation and establish qualitative regularity properties and uniform-in-$\var$ bounds on the solution. Then, we show  that the solution of System \eqref{JXD} converges strongly, as the relaxation parameter $\var\rightarrow0$, to the solution of \eqref{uheat}-\eqref{Darcy} globally in time and we exhibit an explicit convergence rate.

\vspace{1ex}



To achieve our results, the first step is to establish global a-priori estimates which are uniform with respect to the relaxation parameter $\var$. This requires three main ingredients: Firstly, we exhibit a damped mode which has better decay properties than the whole solution and can be used to decouple the system into a purely damped equation and a parabolic one, in low frequencies. 
Secondly, we construct a Lyapunov functional, in the spirit of Beauchard and Zuazua's paper \cite{BZ} and Danchin's \cite{danchin1}, that encodes enough information to recover all the dissipative properties in high frequencies. Finally, we choose a suitable threshold between the low and high frequencies to be able to handle the overdamping phenomena (cf. Remark \ref{remarkoverdamping}) and close the a-priori estimates with enough uniformity. These ingredients are similar to the one present in the works \cite{CBD1,CBD2} concerning general partially dissipative hyperbolic systems and in particular the compressible Euler equations with damping.

\smallbreak
Yet, new difficulties arise in our analysis due to the low-order nonlinear term $f(u)$ that can not be handled in the framework of \cite{CBD1,CBD2,ThesisCB}.
More precisely, due to this nonlinear term:
\begin{itemize}
    \item We can not perform a rescaling as in \cite{CBD1,CBD2} so as to reduce the proofs to the case $\var=1$ and then recover the exact dependency with respect to the parameter $\var$ by scaling back.
    \item It is difficult to estimate the low-order nonlinearity $f(u)$ as a source term in the low-frequency regime as it is not a function of the directly damped component $v$. This obliges us to work in a different low-frequency regularity setting.
    \item Moreover, even if the (SK) condition developed by Kawashima and Shizuta in \cite{kaw1,kaw2} is satisfied by the linearized version of System \eqref{JXD} (i.e. without $f$), it does not allow us to prove the global existence uniformly in $\varepsilon$ as the relaxation parameter plays a key role to deal with the nonlinear term $f(u)$.
\end{itemize}

To overcome these difficulties, delicate energy estimates uniform-in-$\var$ are needed for both low and high frequencies in the spirit of \cite{CBD1,CBD2,c2}. Compared to these references, the damped mode used to diagonalize the system in low frequencies will involve the nonlinear term $f(u)$, and additional estimates have to be established in $\dot{B}^{\frac{d}{2}-1}_{2,1}$.

\smallbreak

The paper is organized as follows. In Section \ref{section2}, we state our main results in the multi-dimensional case $d\geq2$. In Section \ref{section3}, we establish the uniform a-priori estimates and prove the global well-posedness of the Cauchy problem for the Jin-Xin model. The rigorous justification of the relaxation limit is performed in Section \ref{section4}. Section \ref{section5} is devoted to the one-dimensional case $d=1$. Some technical results are recalled in the Appendix.

\medbreak

\section{Main results}\label{section2}
\subsection{Littlewood-Paley notations}

Before stating our main results, we recall the notations of the Littlewood-Paley decomposition and Besov spaces. The reader can refer to \cite{bahouri1}[Chapter 2] for a complete overview. Choose a smooth radial non-increasing function $\chi(\xi)$ with compact supported in $B(0,\dfrac{4}{3})$ and $\chi(\xi)=1$ in $B(0,\dfrac{3}{4})$ such that
$$
\varphi(\xi):=\chi(\frac{\xi}{2})-\chi(\xi),\quad \sum_{j\in \mathbb{Z}}\varphi(2^{-j}\cdot)=1,\quad \text{{\rm{Supp}}}~ \varphi\subset \{\xi\in \mathbb{R}^{d}~|~\frac{3}{4}\leq |\xi|\leq \frac{8}{3}\}.
$$
For any $j\in \mathbb{Z}$, the homogeneous dyadic blocks $\dot{\Delta}_{j}$ and the low-frequency cut-off operator $\dot{S}_{j}$ are defined by
$$
\dot{\Delta}_{j}u:=\mathcal{F}^{-1}(\varphi(2^{-j}\cdot )\mathcal{F}u),\quad\quad \dot{S}_{j}u:= \mathcal{F}^{-1}( \chi (2^{-j}\cdot) \mathcal{F} u),
$$
where $\mathcal{F}$ and $\mathcal{F}^{-1}$ stand for the Fourier transform and its inverse. 
Let $\mathcal{S}_{h}'$ be the set of tempered distributions on $\mathbb{R}^{d}$ such that every $u\in \mathcal{S}_{h}'$ satisfies $u\in \mathcal{S}'$ and $\lim_{j\rightarrow-\infty}\|\dot{S}_{j}u\|_{L^{\infty}}=0$. Then one has
\begin{equation}\nonumber
\begin{split}
&u=\sum_{j\in \mathbb{Z}}u_{j}\quad\text{in}~\mathcal{S}',\quad\quad \dot{S}_{j}u= \sum_{j'\leq j-1}u_{j'},\quad \forall u\in \mathcal{S}_{h}'.
\end{split}
\end{equation}
With the help of these dyadic blocks, the homogeneous Besov space $\dot{B}^{s}_{p,r}$ is defined by
$$
\dot{B}^{s}_{p,r}:=\{u\in \mathcal{S}_{h}'~|~\|u\|_{\dot{B}^{s}_{p,r}}:=\Big( \sum_{j\in\mathbb{Z} } \big( 2^{js}\|\ddj u\|_{L^{p}} \big)^{r} \Big)^{\frac{1}{r}}<\infty\}.
$$
We denote the Chemin-Lerner type space $\widetilde{L}^{\varrho}(0,T;\dot{B}^{s}_{p,r})$ as follows:
$$
\widetilde{L}^{\varrho}(0,T;\dot{B}^{s}_{p,r}):=\{u\in L^{\varrho}(0,T;\mathcal{S}'_{h})~|~ \|u\|_{\widetilde{L}^{\varrho}_{T}(\dot{B}^{s}_{p,r})}:=\Big( \sum_{j\in\mathbb{Z} } \big( 2^{js}\|\ddj u\|_{L^{\varrho}_{t}(L^{p})} \big)^{r} \Big)^{\frac{1}{r}}<\infty\}.
$$
Moreover, we write
\begin{equation}\nonumber
\begin{split}
&\mathcal{C}_{b}(\mathbb{R}_{+};\dot{B}^{s}_{p,r}):=\{u\in\mathcal{C}(\mathbb{R}_{+};\dot{B}^{s}_{p,r})~|~\|f\|_{\widetilde{L}^{\infty}(\mathbb{R}_{+};\dot{B}^{s}_{p,r})}<\infty\}.
\end{split}
\end{equation}

In order to restrict our Besov norms to low and high frequencies, we set an integer $J_{\var}$, called threshold, to be chosen later. We use the following notations:
\begin{equation}\nonumber
\begin{split}
&\|u\|_{\dot{B}^{s}_{p,r}}^{h}:=\Big( \sum_{j\leq J_{\var}} \big( 2^{js}\|\ddj u\|_{L^{p}} \big)^{r} \Big)^{\frac{1}{r}},\quad\quad\quad\quad \|u\|_{\dot{B}^{s}_{p,r}}^{h}:=\Big( \sum_{j\geq J_{\var}-1} \big( 2^{js}\|\ddj u\|_{L^{p}} \big)^{r} \Big)^{\frac{1}{r}},\\
&\|u\|_{\widetilde{L}^{\varrho}_{T}(\dot{B}^{s}_{p,r})}^{h}:=\Big( \sum_{j\leq J_{\var}} \big( 2^{js}\|\ddj u\|_{L^{\varrho}_{T}(L^{p})} \big)^{r} \Big)^{\frac{1}{r}}, \quad \|u\|_{\widetilde{L}^{\varrho}_{T}(\dot{B}^{s}_{p,r})}^{h}:=\Big( \sum_{j\geq J_{\var}-1} \big( 2^{js}\|\ddj u\|_{L^{\varrho}_{T}(L^{p})} \big)^{r} \Big)^{\frac{1}{r}}.
\end{split}
\end{equation}
For any $u\in\mathcal{S}'_{h}$, we also define the low-frequency part $u^{\ell}$ and the high-frequency part $u^{h}$ by
$$
u^{\ell}:=\sum_{j\leq J_{\var}-1}u_{j},\quad\quad u^{h}:=u-u^{\ell}=\sum_{j\geq J_{\var}}u_{j}.
$$
It is easy to check for any $s'>0$ that
\begin{equation}\label{lhl}
\begin{aligned}
&\|u^{\ell}\|_{\dot{B}^{s}_{2,1}}\leq \|u\|_{\dot{B}^{s}_{2,1}}^{\ell}\leq 2^{J_{\var}s'}\|u\|_{\dot{B}^{s-s'}_{2,1}}^{\ell},\quad\quad\|u^{h}\|_{\dot{B}_{2,1}^{s}}\leq \|u\|_{\dot{B}_{2,1}^{s}}^{h}\leq 2^{-(J_{\var}-1)s'}\|u\|_{\dot{B}_{2,1}^{s+s'}}^{h}.
\end{aligned}
\end{equation}

\subsection{Main results}

First, we state a uniform-in-$\var$ existence result for System \eqref{JXD}.

\begin{Thm}\label{Thm1} Let $d\geq2$, $n\geq1$, $\bar{u},\bar{v}\geq0$, $\varepsilon\in(0,1)$ and assume \eqref{AssumptionA}-\eqref{Assumptionf}. Set the threshold between the low and high frequencies 
\begin{align}
&J_{\var}:=-[\log_{2}\var]-k_{0},\label{Jvar}
\end{align}
with $k_{0}>0$ a suitably large constant independent of $\var$. There exists a constant $\eta_0>0$ independent of $\var$ such that for initial data $(u_{0},v_{0})$ satisfying $(u_{0}-\bar{u},v_{0}-\bar{v})\in\dot{B}^{\frac{d}{2}-1}_{2,1}\cap\dot{B}^{\frac{d}{2}}_{2,1}$ and
 \begin{equation}\label{small}
\|u_{0}-\bar{u}\|_{\dot{B}^{\frac{d}{2}-1}_{2,1}\cap\dot{B}^{\frac{d}{2}}_{2,1}}+ \var \|v_{0}-\bar{v}\|_{\dot{B}^{\frac{d}{2}-1}_{2,1}\cap\dot{B}^{\frac{d}{2}}_{2,1}}\leq \eta_0,
\end{equation} 
then System \eqref{JXD} associated to the initial data $(u_{0},v_{0})$ admits a unique global strong solution $(u,v)$ satisfying $(u-\bar{u},v-\bar{v})\in \mathcal{C}_{b}(\mathbb{R}_{+};\dot{B}^{\frac{d}{2}-1}_{2,1}\cap\dot{B}^{\frac{d}{2}}_{2,1})$ and
\begin{equation}\label{Xt}
\begin{aligned}
&\|u-\bar{u}\|_{\widetilde{L}^{\infty}_{t}(\dot{B}^{\frac{d}{2}-1}_{2,1}\cap\dot{B}^{\frac{d}{2}}_{2,1})}+\var\| v-\bar{v}\|_{\widetilde{L}^{\infty}_{t}(\dot{B}^{\frac{d}{2}-1}_{2,1}\cap\dot{B}^{\frac{d}{2}}_{2,1})}\\
&\quad\quad+\|u-\bar{u}\|_{L^1_{t}(\dot{B}^{\frac{d}{2}+1}_{2,1}\cap\dot{B}^{\frac{d}{2}+2}_{2,1})}^{\ell}+\frac{1}{\var^2}\|u-\bar{u}\|_{L^1_{t}(\dot{B}^{\frac{d}{2}}_{2,1})}^{h}\\
&\quad\quad+\|v-\bar{v}\|_{L^1_{t}(\dot{B}^{\frac{d}{2}}_{2,1}\cap \dot{B}^{\frac{d}{2}+1}_{2,1})}^{\ell}+\frac{1}{\var}\|v-\bar{v}\|_{L^1_{t}(\dot{B}^{\frac{d}{2}}_{2,1})}^{h}\\
&\quad\quad+\frac{1}{\var}\sum_{i=1}^{d} \|Z_i\|_{L^1_{t}(\dot{B}^{\frac{d}{2}-1}_{2,1}\cap\dot{B}^{\frac{d}{2}}_{2,1})}^{\ell}+\frac{1}{\var^2}\sum_{i=1}^{d}\|Z_i\|_{L^1_{t}(\dot{B}^{\frac{d}{2}-1}_{2,1})}^{h}\\
   &\quad\leq C(\|u_{0}-\bar{u}\|_{\dot{B}^{\frac{d}{2}-1}_{2,1}\cap\dot{B}^{\frac{d}{2}}_{2,1}}+ \var \|v_{0}-\bar{v}\|_{\dot{B}^{\frac{d}{2}-1}_{2,1}\cap\dot{B}^{\frac{d}{2}}_{2,1}})\quad\hbox{for all } t\geq 0,
\end{aligned}
\end{equation}
where $C>0$ is a constant independent of $\var$ and time, and $Z_i:=A_{i}\dfrac{\partial} {\partial x_{i}} u+v_{i}-f_{i}(u)$.
 \end{Thm}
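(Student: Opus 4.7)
The plan is to combine a standard local existence theorem in Besov spaces (obtained for instance by Friedrichs regularization, since for fixed $\varepsilon>0$ the system \eqref{JXD} is symmetrizable hyperbolic with damping) with global-in-time a-priori estimates that are uniform in $\varepsilon$. Writing $U := u-\bar u$ and $V := v-\bar v$, assumption \eqref{Assumptionf} ensures that $g_i(U):=f_i(\bar u+U)-f_i(\bar u)$ vanishes to order two at $U=0$, so that composition and product laws in Besov spaces deliver $\|g_i(U)\|_{\dot B^{s}_{2,1}}\lesssim \|U\|_{L^\infty}\|U\|_{\dot B^{s}_{2,1}}$ for small $U$. From the second equation of \eqref{JXD} one reads the exact identity $\varepsilon^{2}\partial_t V_i=-Z_i$, which both provides the representation $V_i=g_i(U)-A_i \partial_{x_i} U-Z_i$ and reduces the bound on $Z_i$ in \eqref{Xt} to the control of $\varepsilon^{2}\partial_t V_i$. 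The threshold $J_{\varepsilon}$ is tuned so that $2^{J_{\varepsilon}}\varepsilon\simeq 2^{-k_0}$ is small: it is precisely the frequency at which the inertial term $\varepsilon^{2}\partial_t V_i$ and the damping $-V_i$ balance, separating a parabolic low-frequency regime from a damped high-frequency one and avoiding the overdamping phenomenon mentioned in Remark~\ref{remarkoverdamping}.

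For the high-frequency regime $j\geq J_{\varepsilon}-1$, I would follow the Beauchard--Zuazua and \cite{CBD1,CBD2} strategy: apply $\ddj$ to the linearized part of the system, combine the natural $L^{2}$ energy $\|\ddj U\|_{L^2}^{2}+\varepsilon^{2}\|\ddj V\|_{L^2}^{2}$ with a small cross term of the form $\alpha\,\varepsilon\,2^{-j}\sum_{i}\langle \ddj V_i,\partial_{x_i}\ddj U\rangle$ to capture dissipation on $\partial_x U$, and tune $\alpha>0$ small enough to obtain a Lyapunov functional $\cL_j$ comparable to $\|\ddj U\|_{L^2}+\varepsilon\|\ddj V\|_{L^2}$ satisfying a differential inequality of the schematic form
\begin{equation*}
\frac{d}{dt}\cL_j+\frac{c}{\varepsilon^{2}}\cL_j\lesssim \bigl\|\ddj(\text{nonlinear terms})\bigr\|_{L^2}.
\end{equation*}
After multiplication by $2^{jd/2}$, summation over $j\geq J_{\varepsilon}-1$ and time integration, this yields the $L^{1}_{t}$ gains $\varepsilon^{-2}\|U\|^{h}_{L^{1}_{t}(\dot B^{d/2}_{2,1})}$ and $\varepsilon^{-1}\|V\|^{h}_{L^{1}_{t}(\dot B^{d/2}_{2,1})}$ appearing in \eqref{Xt}; the high-frequency bound on $Z_i$ then follows from $Z_i=-\varepsilon^{2}\partial_t V_i$ together with the equation for $V_i$.

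For the low-frequency regime $j\leq J_{\varepsilon}$, substituting $V_i=g_i(U)-A_i\partial_{x_i}U-Z_i$ into the conservation law $\partial_t U+\sum_i\partial_{x_i}V_i=0$ decouples the system into the inhomogeneous heat equation
\begin{equation*}
\partial_t U-\sum_{i=1}^{d}A_i\partial_{x_i}^{2}U=-\sum_{i=1}^{d}\partial_{x_i}g_i(U)+\sum_{i=1}^{d}\partial_{x_i}Z_i,
\end{equation*}
together with a purely damped equation for $Z_i$ obtained by applying $-\varepsilon^{2}\partial_t$ to its defining identity, whose source involves $\partial_t U$, $\partial_x^{2}U$ and $\partial_x g_i(U)$. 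Maximal regularity for the heat equation in $\dot B^{s}_{2,1}$ produces the parabolic gains $\|U\|^{\ell}_{L^{1}_{t}(\dot B^{d/2+1}_{2,1}\cap\dot B^{d/2+2}_{2,1})}$, while the damped equation for $Z_i$, after rescaling time by $\varepsilon^{2}$, delivers the $\varepsilon^{-1}$-weighted low-frequency bound on $Z_i$. The main obstacle, where the argument genuinely departs from \cite{CBD1,CBD2}, is the source $\partial_{x_i}g_i(U)$: because $g_i$ is a function of the non-directly-damped component $U$, it cannot be absorbed at a favorable power of $\varepsilon$ and the rescaling trick used in those references fails. I plan to cope with this by exploiting the quadratic structure $g_i(U)=O(U^{2})$ through composition and product laws in Besov spaces, and by working simultaneously in $\dot B^{d/2-1}_{2,1}$ and $\dot B^{d/2}_{2,1}$ at low frequencies, so that the derivative loss on $g_i(U)$ is compensated by the extra regularity supplied by the heat dissipation. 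Once a closed a-priori inequality for the functional on the left-hand side of \eqref{Xt} is obtained under the smallness hypothesis \eqref{small}, a standard continuation argument promotes the local solution to a global one and delivers the uniform bound.
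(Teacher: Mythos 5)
Your plan follows essentially the same route as the paper: the damped mode $Z_i$ (the paper's effective unknown $z_i$) used to rewrite the low-frequency system as a heat equation for $u-\bar u$ coupled with a purely damped equation for $Z_i$, a hybrid Lyapunov functional with a frequency-weighted cross term giving $\varepsilon^{-2}$ decay for $j\geq J_\varepsilon-1$, the threshold $2^{J_\varepsilon}\sim\varepsilon^{-1}$ to absorb the coupling and avoid overdamping, the extra $\dot{B}^{\frac d2-1}_{2,1}$ low-frequency level combined with quadratic composition estimates to handle $f$, and a Friedrichs scheme with continuation. The only blemishes are inessential bookkeeping slips (e.g.\ the sign of $Z_i$ in the representation of $v_i-\bar v$), which do not affect the estimates.
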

 
 
 \begin{Rmk}
 Some comments are in order.
 \begin{itemize}
     \item By taking the threshold $J_{\var}$ in \eqref{Jvar}, we capture subtle regularity properties of $(u,\var v)$ in the low-frequency region $|\xi|\leq \frac{1}{\var}2^{-k_{0}}$ and high-frequency region $|\xi|\geq \frac{1}{\var}2^{-k_{0}}$ separately. 
     \item The definition \eqref{Jvar} of the threshold $J_\varepsilon$ implies that as $\var\rightarrow0$, the low-frequency region covers all the frequency space and the high-frequency region disappears. The low-frequency regularity estimates in \eqref{Xt} are of parabolic type, which is consistent with that of the limiting parabolic system \eqref{uheat}.
     \item The effective unknowns $Z_i$ $(i=1,2,...,d)$ is related to the equations \eqref{Darcy} verified by the limit system. They have better time-integrability and faster decay rate in $\var$ compared with the solution $(u,v)$. These properties are crucial to close the uniform a-priori estimates and also to justify the relaxation limit.
    \item As there are no advection term in \eqref{JXD}, the spatial Lipschitz regularity of the solution is not required there. Due to the \textit{at-least}-quadratic non linearity $f$, one must assume that the initial data are in $\dot{B}^{\frac{d}{2}-1}_{2,1}\cap\dot{B}^{\frac{d}{2}}_{2,1}$ to ensure the $L^\infty$ norm of the solution so as to avoid Riccati type blow-up.
     \item To include advection terms in our analysis, one should assume $(u_0^h,v_0^h)\in \dot{B}^{\frac d2+1}_{2,1}$ and derive high-frequency a priori estimates in such space as in \cite{CBD2,c1,c2}.
 \end{itemize}
 \end{Rmk}

In order to study the relaxation limit of System \eqref{JXD}, we also need to justify the existence of solution to the limit system \eqref{uheat}. This is done in the following theorem.
\begin{Thm}\label{Thm2} 
Let $d\geq2$, $n\geq1$, and $\bar{u},\bar{v}\geq0$,. There exists a constant $\eta_1>0$ such that if $u_0^*$ satisfies $u_0^*-\bar{u}\in \dot{B}^{\frac d2-1}_{2,1}\cap\dot{B}^{\frac{d}{2}}_{2,1}$ and
\begin{equation}\label{smalllimit}
\|u_{0}^*-\bar{u}\|_{\dot{B}^{\frac d2-1}_{2,1}\cap\dot{B}^{\frac{d}{2}}_{2,1}}\leq \eta_{1},
\end{equation}
then System \eqref{uheat} associated to the initial data  $u^*_0$
admits a unique global strong solution $u^*$ satisfying $u^*-\bar{u}\in \cC_b(\R^+;\dot{B}^{\frac d2-1}_{2,1}\cap\dot{B}^{\frac{d}{2}}_{2,1})$. Let  $v^{*}$ be given by \eqref{Darcy}. Then it holds that
\begin{equation}\label{Xtlimit}
    \begin{aligned}
    &\|u^*-\bar{u}\|_{L^{\infty}_{t}(\dot{B}^{\frac d2-1}_{2,1}\cap\dot{B}^{\frac{d}{2}}_{2,1})}+\|u^*-\bar{u}\|_{L^1_{t}(\dot{B}^{\frac{d}{2}+1}_{2,1}\cap\dot{B}^{\frac{d}{2}+2}_{2,1})}+\|v^*-\bar{v}\|_{L^1_{t}(\dot{B}^{\frac{d}{2}}_{2,1}\cap\dot{B}^{\frac{d}{2}+1}_{2,1})}\\
    &\quad \leq C\|u^*_0-\bar{u}\|_{\dot{B}^{\frac d2-1}_{2,1}\cap\dot{B}^{\frac{d}{2}}_{2,1}} \quad\quad\hbox{for all } t\geq 0,
    \end{aligned}
\end{equation}
where $C>0$ is a constant independent of time.
   \end{Thm}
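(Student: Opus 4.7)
The plan is to view \eqref{uheat} as a semilinear perturbation of a decoupled heat equation for the perturbation $w := u^* - \bar u$. Writing the equation as
\begin{equation*}
\partial_t w - \sum_{i=1}^d a_i \partial_{x_i}^2 w = -\sum_{i=1}^d \partial_{x_i} g_i(w), \qquad g_i(w) := f_i(\bar u + w) - f_i(\bar u),
\end{equation*}
the assumption $f_i'(\bar u) = 0$ and Taylor's formula give $g_i(w) = \int_0^1 (1-\theta) f_i''(\bar u + \theta w)\,d\theta\,(w,w)$, so $g_i$ vanishes to second order at $w=0$. Since every $a_i > 0$, the principal part $\sum_i a_i \partial_{x_i}^2$ is uniformly elliptic and acts componentwise on $w$, so the standard Besov maximal-regularity estimate for the heat equation (cf.~\cite{bahouri1}) applied block by block yields, for $s \in \{\tfrac d2 - 1, \tfrac d2\}$,
\begin{equation*}
\|w\|_{\widetilde{L}^{\infty}_t(\dot B^s_{2,1})} + \|w\|_{L^1_t(\dot B^{s+2}_{2,1})} \leq C\|w_0\|_{\dot B^s_{2,1}} + C\sum_{i=1}^d \|g_i(w)\|_{L^1_t(\dot B^{s+1}_{2,1})}.
\end{equation*}

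Next I would estimate the nonlinear remainder with the standard product law and composition estimate in $\dot B^{d/2}_{2,1}$, recalling that $\dot B^{d/2}_{2,1}$ is an algebra embedded in $L^\infty$. Writing $g_i(w) = w \cdot H_i(w)$ with $H_i$ smooth and $H_i(0) = 0$, and requiring $\|w\|_{\widetilde L^\infty_t(\dot B^{d/2}_{2,1})}$ to stay small, one obtains
\begin{equation*}
\|g_i(w)\|_{\dot B^{d/2}_{2,1}} \lesssim \|w\|_{\dot B^{d/2}_{2,1}}^{2}, \qquad \|g_i(w)\|_{\dot B^{d/2+1}_{2,1}} \lesssim \|w\|_{\dot B^{d/2}_{2,1}}\, \|w\|_{\dot B^{d/2+1}_{2,1}}.
\end{equation*}
Time-integration combined with the embedding $\widetilde L^\infty_t(\dot B^{d/2}_{2,1}) \cap L^1_t(\dot B^{d/2+2}_{2,1}) \hookrightarrow L^2_t(\dot B^{d/2+1}_{2,1})$ converts both bounds into quadratic expressions in
\begin{equation*}
X(t) := \|w\|_{\widetilde L^\infty_t(\dot B^{d/2-1}_{2,1}\cap \dot B^{d/2}_{2,1})} + \|w\|_{L^1_t(\dot B^{d/2+1}_{2,1}\cap \dot B^{d/2+2}_{2,1})}.
\end{equation*}
Inserted into the linear estimate this produces the master inequality $X(t) \leq C\eta_1 + CX(t)^2$, and a standard continuity/bootstrap argument then yields $X(t) \lesssim \eta_1$ for all $t \geq 0$ provided $\eta_1$ is chosen small enough.

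Existence itself would be handled by a Friedrichs spectral truncation scheme followed by passage to the limit thanks to the uniform bound on $X$, or alternatively by a Banach fixed-point argument in the complete metric space defined by $X$; uniqueness follows from the same bilinear structure applied to the difference of two solutions in a mildly weaker norm. The estimate on $v^*$ is then immediate from \eqref{Darcy}: writing $v_i^* - \bar v = -a_i \partial_{x_i} w + g_i(w)$, the $L^1_t(\dot B^{d/2}_{2,1}\cap\dot B^{d/2+1}_{2,1})$ norm of $v_i^* - \bar v$ is controlled by $\|w\|_{L^1_t(\dot B^{d/2+1}_{2,1}\cap\dot B^{d/2+2}_{2,1})}$ plus the already-controlled quadratic terms. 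I do not expect any substantial obstacle here: the limit system is fully parabolic and the nonlinearity vanishes to second order, so no low/high-frequency threshold or effective-unknown analysis (in contrast to Theorem \ref{Thm1}) is needed to close the estimates.
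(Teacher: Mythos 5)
Your proposal is correct and follows essentially the same route as the paper: maximal regularity for the parabolic equation in $\dot{B}^{\frac d2-1}_{2,1}$ and $\dot{B}^{\frac d2}_{2,1}$, composition/product estimates exploiting $f_i'(\bar u)=0$ to get quadratic bounds on the source, interpolation to $\widetilde{L}^2_t$-in-time norms, a bootstrap closing $X(t)\lesssim\eta_1$, Friedrichs approximation for existence and uniqueness, and the relation \eqref{Darcy} for $v^*$. No substantive differences from the paper's argument.
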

\vspace{1ex}
   
Finally, we state a result pertaining to the relaxation limit of System \eqref{JXD}.
\begin{Thm}\label{Thm3} 
For any $d\geq2$, $n\geq1$, $\bar{u},\bar{v}\geq0$ and $\varepsilon\in(0,1)$, let $(u,v)$ be the solution of System \eqref{JXD} associated to the initial data $(u_{0},v_{0})$ given by Theorem \ref{Thm1},  $u^*$ be the global solution of System \eqref{uheat} associated to the initial data $u_{0}^{*}$ given by Theorem \ref{Thm2} and $v^{*}$ be defined by \eqref{Darcy}. Suppose further that
\begin{align}\label{error}
    \|u_0-u_0^*\|_{B^{\frac d2-2}_{2,1}\cap\dot{B}^{\frac{d}{2}-1}_{2,1}}\leq\varepsilon.
\end{align}
Then for all $t\geq0$, the following convergence estimates hold{\rm:}
\begin{equation}\label{rate}
\begin{aligned}
&\|u-u^*\|_{L^{\infty}_{t}(\dot{B}^{\frac{d}{2}-2}_{2,1}\cap\dot{B}^{\frac{d}{2}-1}_{2,1})}+\|u-u^{*}\|_{ L^1_{t}(\dot{B}^{\frac{d}{2}}_{2,1})\cap \widetilde{L}^2_{t}( \dot{B}^{\frac{d}{2}}_{2,1}) }+\|v-v^{*}\|_{L^1_{t}(\dot{B}^{\frac{d}{2}-1}_{2,1})}\leq C\varepsilon,
\end{aligned}
\end{equation}
where $C>0$ is a constant independent of time and $\var$.

Therefore, as $\var\rightarrow0$, $(u,v)$ converges to $(u^{*},v^{*})$ in the following sense{\rm:}
\begin{equation}\nonumber
\left\{
\begin{aligned}
&u\rightarrow u^{*}\quad \text{strongly in}\quad L^{\infty}(\mathbb{R}_{+};\dot{B}^{\frac{d}{2}-2}_{2,1}\cap\dot{B}^{\frac{d}{2}-1}_{2,1})\cap L^p(\mathbb{R}_{+};\dot{B}^{\frac{d}{2}}_{2,1}) ,\quad p\in[1,2],\\
&v\rightarrow v^{*}\quad\text{strongly in}\quad L^{1}(\mathbb{R}_{+};\dot{B}^{\frac{d}{2}-1}_{2,1}).
\end{aligned}
\right.
\end{equation}
\end{Thm}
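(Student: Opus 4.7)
The plan is to exploit the fact that the error $\delta u := u - u^*$ satisfies a parabolic equation with small forcing. Combining the first equation of \eqref{JXD} with the identity $v_i = -A_i\partial_{x_i} u + f_i(u) + Z_i$ (which follows directly from the definition of $Z_i$), one derives
\begin{equation*}
\partial_t u - \sum_{i=1}^d A_i\partial_{x_i}^2 u = -\sum_{i=1}^d \partial_{x_i} f_i(u) - \sum_{i=1}^d \partial_{x_i} Z_i.
\end{equation*}
Subtracting \eqref{uheat} yields the error equation
\begin{equation*}
\partial_t \delta u - \sum_{i=1}^d A_i\partial_{x_i}^2 \delta u = -\sum_{i=1}^d \partial_{x_i}\bigl(f_i(u)-f_i(u^*)\bigr) - \sum_{i=1}^d \partial_{x_i} Z_i,
\end{equation*}
namely a heat equation driven by a semilinear term and a source $-\mathrm{div}\,Z$ of size $O(\varepsilon)$ in $L^1_t(\dot B^{d/2-1}_{2,1})$ by Theorem \ref{Thm1}. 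The algebraic identity $\delta v_i = Z_i - A_i\partial_{x_i}\delta u + (f_i(u) - f_i(u^*))$ will in turn convert any bound on $\delta u$ into one on $\delta v$ at one derivative lower.

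I will then apply maximal-regularity heat estimates for $\delta u$ in $\dot B^{d/2-2}_{2,1}\cap \dot B^{d/2-1}_{2,1}$; the parabolic gain of two derivatives in the $L^1_t$ norm automatically produces the $L^1_t(\dot B^{d/2}_{2,1})$ control appearing in \eqref{rate}. The initial datum contribution is $O(\varepsilon)$ by \eqref{error}. At the bottom regularity $\dot B^{d/2-2}_{2,1}$ one directly has $\|\mathrm{div}\,Z\|_{L^1_t(\dot B^{d/2-2}_{2,1})}\lesssim \|Z\|_{L^1_t(\dot B^{d/2-1}_{2,1})}=O(\varepsilon)$. At the top regularity $\dot B^{d/2-1}_{2,1}$, the bound on $\|Z\|_{L^1_t(\dot B^{d/2}_{2,1})}$ from \eqref{Xt} is available only in low frequencies, so the high-frequency part of $\delta u$ in $\dot B^{d/2-1}_{2,1}$ must be recovered by triangle inequality: the threshold $J_\varepsilon = -[\log_2\varepsilon]-k_0$ combined with Theorems \ref{Thm1} and \ref{Thm2} yields $\|u^h\|_{L^\infty_t(\dot B^{d/2-1}_{2,1})} \lesssim 2^{-J_\varepsilon}\|u\|^h_{L^\infty_t(\dot B^{d/2}_{2,1})} \lesssim \varepsilon$ and the analogous bound for $u^*$.

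For the nonlinear term, the at-least-quadratic assumption \eqref{Assumptionf} lets us write $f_i(u) - f_i(u^*) = \bigl(\int_0^1 f_i'(u^*+\theta\delta u)\,d\theta\bigr)\delta u$, and composition estimates applied to the uniformly small quantities $u-\bar u$ and $u^*-\bar u$ (furnished by \eqref{Xt} and \eqref{Xtlimit}) ensure that the integrand is small in $\dot B^{d/2}_{2,1}$. Paraproduct estimates then bound $\|\mathrm{div}(f(u)-f(u^*))\|_{L^1_t(\dot B^{d/2-2}_{2,1}\cap\dot B^{d/2-1}_{2,1})}$ by the desired left-hand side times a small constant, which is absorbed. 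The principal obstacle is exactly this nonlinear bookkeeping: keeping the smallness of the prefactors uniform in $\varepsilon$ and in time, while simultaneously reconciling the mismatched low/high-frequency structure inherited from Theorem \ref{Thm1} (in particular, the fact that $Z$ sits in $\dot B^{d/2}_{2,1}$ only at the low-frequency level). Once $\|\delta u\|_{L^\infty_t(\dot B^{d/2-2}_{2,1}\cap\dot B^{d/2-1}_{2,1})} \lesssim \varepsilon$ is closed by a continuity/bootstrap argument, the $\widetilde L^2_t(\dot B^{d/2}_{2,1})$ bound follows by interpolation between the $\widetilde L^\infty_t(\dot B^{d/2-1}_{2,1})$ and $L^1_t(\dot B^{d/2+1}_{2,1})$ estimates, and the $L^1_t(\dot B^{d/2-1}_{2,1})$ bound on $\delta v$ is obtained by plugging the above bounds into the algebraic identity for $\delta v_i$; the claimed strong convergences are immediate consequences of \eqref{rate}.
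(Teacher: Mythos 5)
Your plan is correct and follows essentially the same route as the paper's proof in Section \ref{section4}: the same error equation for $\delta u=u-u^*$ driven by $-\mathrm{div}\,Z$ (of size $O(\varepsilon)$ in $L^1_t(\dot B^{\frac d2-1}_{2,1})$ by Theorem \ref{Thm1}), maximal regularity at the level $\dot B^{\frac d2-2}_{2,1}\cap\dot B^{\frac d2-1}_{2,1}$, absorption of $f(u)-f(u^*)$ through the smallness furnished by Theorems \ref{Thm1}--\ref{Thm2}, direct triangle-inequality treatment of the high frequencies via the threshold $J_\varepsilon$, and the algebraic identity for $\delta v$. The only differences are cosmetic: you use a Taylor integral where the paper factorizes $f_i(u)-f_i(\bar u)=(u-\bar u)g_i(u)$, and you obtain the $\widetilde L^2_t(\dot B^{\frac d2}_{2,1})$ control by interpolation where the paper invokes the $L^2$-in-time estimate \eqref{maximal2} in low frequencies.
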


\vspace{1ex}

\begin{Rmk}
By \eqref{Xt}, \eqref{rate} and interpolation, the higher order convergence estimates can be obtained as follows{\rm:}
\begin{equation}\nonumber
\begin{aligned}
\|u-u^*\|_{L^{\infty}_{t}(\dot{B}^{\frac{d}{2}-\sigma}_{2,1})}+\|v-v^{*}\|_{L^1_{t}(\dot{B}^{\frac{d}{2}-\sigma}_{2,1})}&\lesssim \varepsilon^{\sigma},\quad 0< \sigma<1.
\end{aligned}
\end{equation}
\end{Rmk}

\begin{Rmk}
Concerning the one-dimensional $(d=1)$ case, we are able to obtain the global existence and convergence estimates in some larger Besov spaces {\rm(}see Theorem \ref{Thm1d} in Section {\rm\ref{section5})}.
\end{Rmk}





\subsection{Spectral analysis of \eqref{JXD}}

Define the perturbation 
$$
m:=u-\bar{u},\quad\quad w:=\var(v-\bar{v}).
$$
Since \eqref{Assumptionf} is satisfied,  the Cauchy problem of System \eqref{JXD} subject to the initial data $(u_{0},v_{0})$ can be rewritten as
\begin{equation}\label{rJXD}
\left\{
\begin{aligned}
&\frac{\partial} {\partial t}m+\frac{1}{\var}\sum_{i=1}^{d} \frac{\partial} {\partial x_{i}} w_{i}=0, \\
&\frac{\partial} {\partial t} w_{i}+\frac{1}{\var} A_{i}\frac{\partial} {\partial x_{i}} m+\frac{1}{\var^2}w_{i}= \frac{1}{\var} (f_{i}(\bar{u}+m)-f_{i}(\bar{u}) ),\quad\quad i=1,2,...,d,\\
&(m,w)|_{t=0}=(m_{0},w_{0}):=(u_{0}-\bar{u},\var(v_{0}-\bar{v})).
\end{aligned}
\right.
\end{equation}
In order to understand the behavior of the solution and explain the choice of the threshold $J_{\var}$ in  \eqref{Jvar}, we analyse the eigenvalues of the linearized system as follows. 

For the sake of clarity, we consider the case $(d,n)=(3,1)$ and rewrite the linearized equations of \eqref{rJXD} as
 \begin{equation}\nonumber
\begin{aligned}
& \partial_{t}
\left(\begin{matrix}
   m  \\
   \var w_{1}  \\
   \var w_{2}\\
   \var w_{3}
  \end{matrix}\right)
  =\widehat{\mathbb{A}}(\xi)\left(\begin{matrix}
   m  \\
   \var w_{1}  \\
   \var w_{2}\\
   \var w_{3}
     \end{matrix}\right),\quad\quad  \widehat{\mathbb{A}}(\xi):=\left(\begin{matrix}
0                              &   -\frac{1}{\var}i\xi_{1} & -\frac{1}{\var}i\xi_{2} & -\frac{1}{\var}i\xi_{3}  &\\
-\frac{1}{\var}i a_{1} \xi_{1} & -\frac{1}{\var^2}        &  0                       & 0       &\\    
-\frac{1}{\var}i a_{2} \xi_{2} & 0                       & -\frac{1}{\var^2}        & 0       &\\
-\frac{1}{\var}i a_{3} \xi_{2} & 0                       & 0                        &  -\frac{1}{\var^2}&
  \end{matrix}\right).
   \end{aligned}
\end{equation}
Then we can compute the eigenvalues of the matrix $\widehat{\mathbb{A}}(\xi)$ from the determinant
$$
{\rm{det}}(\widehat{\mathbb{A}}(\xi)-\lambda I_{4})=(\lambda+\frac{1}{\var^2})^{2} (\lambda^2+\frac{1}{\var^2}\lambda+\frac{1}{\var^2}  \sum_{i=1}^{3} a_{i} |\xi_{i}|^2 )=0,
$$
and one has
\begin{equation}\nonumber
\left\{
\begin{aligned}
&\lambda_{1}=\lambda_{2}=-\frac{1}{\var^2},\\
&\lambda_{3}=-\frac{1}{2\var^2}+\frac{1}{2\var}\sqrt{\frac{1}{\var^2}-4\sum_{i=1}^{3} a_{i} |\xi_{i}|^2 },\\
&\lambda_{4}=-\frac{1}{2\var^2}-\frac{1}{2\var}\sqrt{\frac{1}{\var^2}-4\sum_{i=1}^{3} a_{i} |\xi_{i}|^2 }.
\end{aligned}
\right.
\end{equation}
The eigenvalues have following properties:
\begin{itemize}
\item In the low-frequency region $|\xi|\lesssim\frac{1}{\var}$, all the eigenvalues are real, and we have $\lambda_{3}\sim -|\xi|^2$ and $\lambda_{4}\sim -\frac{1}{\var^2}$.

\item In the high-frequency region $|\xi|\gtrsim\frac{1}{\var}$, the complex conjugated  eigenvalues $\lambda_{i}$ ($i=3,4$) are  equal to $-\frac{1}{2\var^2} \pm  i  \frac{ O(1) }{\var}|\xi|  $ asymptotically.
\end{itemize}
As seen above, we expect all the frequency modes to be exponentially damped uniformly in $\var$ except for the low-frequency mode associated to the eigenvalue $\lambda_3$ that undergoes a parabolic effect. This mode is related to the limit equation \eqref{uheat} and will remain after passing to the limit as $\var\to0$.
Moreover, the above spectral analysis suggests us to fix the threshold $J_{\var}\sim \frac{1}{\var}$ in order to separate the low and high behaviors of the solution. See also the remark below.


\begin{Rmk}\label{remarkoverdamping}{\rm[On the overdamping phenomenon]}
The overdamping phenomena refers to the fact that \textit{as the friction coefficient $\frac{1}{\var}$ gets larger, the decay rates do not necessarily increase and follow }$\min\{\varepsilon, \frac{1}{\varepsilon}\}$, cf. \cite{SlidesZuazua} and Remark {\rm 3.1} in \cite{burtea1} for more details. This can been seen in the above spectral analysis and prevents to deal with such relaxation limit in the Sobolev spaces setting. In the present paper, we deal with this phenomena by assuming that the threshold $J_{\var}$ satisfy $2^{J_{\var}}\sim \frac{1}{\varepsilon}$ so as to keep track of the decay rates in a suitable manner.
\end{Rmk}

\section{Proof of Theorem \ref{Thm1}}\label{section3}

\subsection{Global a-priori estimates}

\begin{Prop}\label{aprioriJX}
 For any $d\geq2$, $n\geq1$, $\var\in(0,1)$ and given time $T>0$ , suppose that $(m,w)$ is a solution to the Cauchy problem  \eqref{rJXD} such that
 \begin{equation}\label{apriori}
 \begin{aligned}
\!\! X(t):&=\|(m,w)\|_{\widetilde{L}^{\infty}_{t}(\dot{B}^{\frac{d}{2}-1}_{2,1}\cap\dot{B}^{\frac{d}{2}}_{2,1})}\\
&\quad+\|m\|_{L^1_{t}(\dot{B}^{\frac{d}{2}+1}_{2,1}\cap\dot{B}^{\frac{d}{2}+2}_{2,1})}^{\ell}+\frac{1}{\var^2}\|m\|_{L^1_{t}(\dot{B}^{\frac{d}{2}}_{2,1})}^{h}+\|m\|_{\widetilde{L}^2_{t}(\dot{B}^{\frac{d}{2}}_{2,1}\cap\dot{B}^{\frac{d}{2}+1}_{2,1})}^{\ell}+\frac{1}{\var}\|m\|_{\widetilde{L}^2_{t}(\dot{B}^{\frac{d}{2}}_{2,1})}^{h}\\
   &\quad+\frac{1}{\var}\|w\|_{L^1_{t}(\dot{B}^{\frac{d}{2}}_{2,1}\cap \dot{B}^{\frac{d}{2}+1}_{2,1})}^{\ell}+\frac{1}{\var^2}\|w\|_{L^1_{t}(\dot{B}^{\frac{d}{2}}_{2,1})}^{h}+\frac{1}{\var}\|z\|_{L^1_{t}(\dot{B}^{\frac{d}{2}-1}_{2,1}\cap\dot{B}^{\frac{d}{2}}_{2,1})}^{\ell}+\frac{1}{\var^2}\|z\|_{L^1_{t}(\dot{B}^{\frac{d}{2}-1}_{2,1})}^{h}\\
 &\leq 2C_{0}\|(m_{0},w_{0})\|_{\dot{B}^{\frac{d}{2}-1}_{2,1}\cap\dot{B}^{\frac{d}{2}}_{2,1}},\quad\quad t\in (0,T),
 \end{aligned}
 \end{equation}
where the effective unknown $z$ is defined by
\begin{equation}\label{effective}
\begin{aligned}
z_{i}:=A_{i}\frac{\partial} {\partial x_{i}} m+\frac{1}{\var}w_{i}+f_{i}(\bar{u})-f_{i}(\bar{u}+m),\quad\quad i=1,2,...,d,
\end{aligned}
\end{equation}
and $C_{0}>1$ is a suitably large constant. 

There exists a constant $\eta_{0}>0$ such that if $\|(m_{0},w_{0})\|_{\dot{B}^{\frac{d}{2}-1}_{2,1}\cap\dot{B}^{\frac{d}{2}}_{2,1}}\leq \eta_{0}$, then $(m,w)$ satisfies
  \begin{equation}\label{aprioriestimate}
 \begin{aligned}
 &X(t)\leq C_{0}\|(m_{0}, w_{0})\|_{\dot{B}^{\frac{d}{2}-1}_{2,1}\cap\dot{B}^{\frac{d}{2}}_{2,1}},\quad\quad t\in (0,T).
 \end{aligned}
 \end{equation}
\end{Prop}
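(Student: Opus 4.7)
\begin{Proof}[Proof sketch]
The plan is a standard bootstrap: under the hypothesis \eqref{apriori}, we shall derive \eqref{aprioriestimate} by splitting the analysis into low-frequency and high-frequency regimes (separated by the threshold $J_\varepsilon$), using the effective unknown $z$ to extract a parabolic structure on $m$ at low frequencies and a Lyapunov-type functional to recover the missing dissipation on $m$ at high frequencies. Throughout, the nonlinear contributions arising from $f(\bar u+m)-f(\bar u)$ are controlled via the product and composition laws in Besov spaces recalled in the Appendix, the assumption $f_i'(\bar u)=0$ (which makes $f(\bar u+m)-f(\bar u)$ \emph{at least quadratic} in $m$), and the $L^\infty$-control coming from $\dot B^{d/2}_{2,1}\hookrightarrow L^\infty$.

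First I would derive the evolution equation for the effective unknown $z$. A direct computation using \eqref{rJXD} and \eqref{effective} yields
\begin{equation}\nonumber
\partial_t z_i+\frac{1}{\varepsilon^2}z_i=-\frac{1}{\varepsilon}A_i\partial_{x_i}\div w+\frac{1}{\varepsilon}f_i'(\bar u+m)\,\div w,
\end{equation}
so $z_i$ obeys a purely damped equation whose right-hand side gains a factor $1/\varepsilon$ that will balance against the norms of $w$ present in $X(t)$. Applying $\ddj$, taking $L^2$ inner products and summing with Besov weights produces the $z$-norms appearing in \eqref{apriori} in both low and high frequencies, with decay rate $\varepsilon^{-2}$. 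Next, eliminating $w_i=\varepsilon z_i-\varepsilon A_i\partial_{x_i}m+\varepsilon(f_i(\bar u+m)-f_i(\bar u))$ in the first equation of \eqref{rJXD} gives
\begin{equation}\nonumber
\partial_t m-\sum_{i=1}^{d}A_i\partial_{x_i}^{2}m=-\sum_{i=1}^{d}\partial_{x_i}z_i-\sum_{i=1}^{d}\partial_{x_i}\bigl(f_i(\bar u+m)-f_i(\bar u)\bigr),
\end{equation}
a genuine heat equation for $m$ driven by $z$ and a quadratic nonlinearity. Standard maximal-regularity estimates for the heat semigroup in $\dot B^s_{2,1}$ then deliver, in the low-frequency regime $j\le J_\varepsilon$, the parabolic gains on $m$ at regularities $\dot B^{d/2+1}_{2,1}\cap\dot B^{d/2+2}_{2,1}$ and the $\widetilde L^2_t$ bounds, after which the identity $w_i=\varepsilon z_i-\varepsilon A_i\partial_{x_i}m+\varepsilon(f_i(\bar u+m)-f_i(\bar u))$ transfers these gains into the claimed bounds on $w^\ell$.

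At high frequencies, the parabolic gain on $m$ is no longer available from the heat structure alone; instead, I would construct a Beauchard--Zuazua--Danchin Lyapunov functional as in \cite{BZ,danchin1,CBD1,CBD2}. Applying $\ddj$ to \eqref{rJXD}, I would look at
\begin{equation}\nonumber
\mathcal L_j^{2}:=\|\ddj m\|_{L^{2}}^{2}+\|\ddj w\|_{L^{2}}^{2}+2\alpha\,\varepsilon\sum_{i=1}^{d}\int_{\R^d}\ddj w_i\cdot\ddj(A_i\partial_{x_i}m)\,dx,
\end{equation}
for some small $\alpha>0$, which for $j\ge J_\varepsilon-1$ is equivalent to $\|\ddj(m,w)\|_{L^2}^{2}$ and, thanks to the cross term, produces after time-differentiation a dissipation rate of the form $\varepsilon^{-2}\|\ddj w\|_{L^2}^{2}+\|\ddj\nabla m\|_{L^2}^{2}$ modulo nonlinear remainders and low-order commutators. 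The threshold $2^{J_\varepsilon}\sim\varepsilon^{-1}$ is chosen precisely so that the cross term stays small relative to the diagonal part and so that, integrating in time after multiplying by $2^{jd/2}$ and summing, one recovers the high-frequency bounds on $m$ and $w$ in \eqref{apriori} with the correct powers of $\varepsilon$ (this is the point that, as emphasised in Remark \ref{remarkoverdamping}, ensures the overdamping factor $\min\{\varepsilon,\varepsilon^{-1}\}$ is tracked sharply).

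I expect the main obstacle to be the uniform-in-$\varepsilon$ treatment of the nonlinearity $g(m):=f(\bar u+m)-f(\bar u)$, which appears both as the inhomogeneity in the $z$-equation and as the obstruction to a clean rescaling. The key inputs are (i) composition estimates giving $\|g(m)\|_{\dot B^{s}_{2,1}}\lesssim \|m\|_{L^\infty}\|m\|_{\dot B^{s}_{2,1}}$ for $s\in\{d/2-1,d/2\}$, valid because $g(0)=g'(0)=0$, and (ii) the product laws $\|g(m)\,h\|_{\dot B^{s}_{2,1}}\lesssim \|m\|_{\dot B^{d/2}_{2,1}}\|h\|_{\dot B^{s}_{2,1}}$ needed to control $f_i'(\bar u+m)\div w$ in the source term of the $z$-equation. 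Crucially, the decomposition of $g(m)$ into its low- and high-frequency parts must be matched against the two-tier regularity $\dot B^{d/2-1}_{2,1}\cap\dot B^{d/2}_{2,1}$ of the linear estimates, and the quadratic gain allows every such nonlinear contribution to be bounded by $C X(t)^2$. Combining the low- and high-frequency inequalities and using the hypothesis $X(t)\le 2C_0\|(m_0,w_0)\|_{\dot B^{d/2-1}_{2,1}\cap\dot B^{d/2}_{2,1}}$ yields
\begin{equation}\nonumber
X(t)\le \tfrac{C_0}{2}\|(m_0,w_0)\|_{\dot B^{d/2-1}_{2,1}\cap\dot B^{d/2}_{2,1}}+C\,X(t)^{2},
\end{equation}
and a standard bootstrap under the smallness assumption $\|(m_0,w_0)\|_{\dot B^{d/2-1}_{2,1}\cap\dot B^{d/2}_{2,1}}\le\eta_0$ with $\eta_0$ chosen so that $2CC_0\eta_0\le 1/2$ closes the estimate and proves \eqref{aprioriestimate}.
\end{Proof}
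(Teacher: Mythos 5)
Your overall architecture is the same as the paper's: rewrite the system in terms of the effective unknown $z$, treat $m$ as a heat equation and $z$ as a damped equation at low frequencies, recover $w$ from the algebraic identity, run a Lyapunov-functional argument at high frequencies, and close by quadratic composition estimates and smallness. However, the high-frequency functional as you wrote it does not work. With the $j$-independent cross weight $\alpha\var$, the cross term is of size $\alpha\var\,2^{j}\|\ddj m\|_{L^2}\|\ddj w\|_{L^2}$, and since $2^{j}$ is unbounded on the range $j\geq J_\var-1$ the claimed equivalence $\mathcal{L}_j^2\sim\|\ddj(m,w)\|_{L^2}^2$ fails as soon as $2^{j}\gg(\alpha\var)^{-1}$. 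Worse, differentiating the cross term and using $\partial_t m=-\var^{-1}\sum_i\partial_{x_i}w_i$ produces a contribution of size $\alpha\,2^{2j}\|\ddj w\|_{L^2}^2$, which can be absorbed by the damping $\var^{-2}\|\ddj w\|_{L^2}^2$ only if $\alpha\,\var^2 2^{2j}\lesssim1$, again false away from the threshold. This forces a frequency-dependent weight of order $2^{-2j}/\var$ (the paper's $\tfrac{2^{-2j}\eta}{\var}$), and with that weight the dissipation recovered for $m$ is $\var^{-2}\|\ddj m\|_{L^2}^2$ rather than the parabolic $\|\nabla\ddj m\|_{L^2}^2$ you announce — which is in fact all that $X(t)$ requires, since it only contains $\var^{-2}\|m\|^{h}_{L^1_t(\dot{B}^{\frac d2}_{2,1})}$.

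Two further points would not close as written. First, you claim the damped $z$-equation yields the high-frequency bound $\var^{-2}\|z\|^{h}_{L^1_t(\dot{B}^{\frac d2-1}_{2,1})}$; but its source $\var^{-1}A_i\partial_{x_i}\sum_k\partial_{x_k}w_k$ costs two derivatives of $w$, and $X(t)$ gives no control of $\|w\|^{h}_{L^1_t(\dot{B}^{\frac d2+1}_{2,1})}$ in high frequencies, so this route fails; the bound should instead be read off directly from the definition of $z$, the high-frequency bounds on $(m,w)$ and \eqref{lhl}, as the paper does. Second, "every nonlinear contribution is bounded by $CX(t)^2$" hides the two places where the argument is delicate: the term $\var^{-1}\|f(\bar u+m)-f(\bar u)\|^{h}_{L^1_t(\dot{B}^{\frac d2}_{2,1})}$ needs the refined composition estimate \eqref{q2}, which gains a factor $\var$ on the low-frequency block of $m$ (your crude estimate $\|g(m)\|_{\dot B^{s}_{2,1}}\lesssim\|m\|_{L^\infty}\|m\|_{\dot B^{s}_{2,1}}$ leaves an uncompensated $\var^{-1}\|m\|^{\ell}_{\widetilde L^2_t(\dot{B}^{\frac d2}_{2,1})}$, which $X$ does not control); and the linear couplings in the low-frequency estimates (your $\var^{-1}A_i\partial_{x_i}\sum_k\partial_{x_k}w_k$ kept in terms of $w$, or equivalently the terms $2^{J_\var}\|z\|^{\ell}_{L^1_t(\dot{B}^{\frac d2-1}_{2,1})}$ and $\var 2^{J_\var}\|m\|^{\ell}_{L^1_t(\dot{B}^{\frac d2+1}_{2,1})}$ after substitution) are not quadratic and are absorbed only because $\var 2^{J_\var}=2^{-k_0}$ is small. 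Your final inequality $X(t)\le\tfrac{C_0}{2}X_0+CX(t)^2$ silently assumes this absorption, which must be made explicit: it is precisely where the choice \eqref{Jvar} of the threshold enters the low-frequency analysis, not only the high-frequency one.
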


Proposition \ref{aprioriJX} will be derived from the linear combination of low-frequency estimates \eqref{lowd2111}-\eqref{lowd2152} and high-frequency estimates  \eqref{Highe}-\eqref{Highe2} established in the next pages.

\subsubsection{Low-frequency analysis}
\leavevmode\par\medbreak

We fix the threshold $J_{\var}$ in \eqref{Jvar} to perform the low-frequency estimates. Inspired by the previous works \cite{c1,c2}, we introduce the effective unknown $z$ defined in \eqref{effective} to rewrite the equations $\eqref{rJXD}_{1}$-$\eqref{rJXD}_{2}$ as 
\begin{equation} \label{JXDZ} 
    \left\{
    \begin{aligned}
    &\frac{\partial}{\partial t}m-\sum_{i=1}^{d}\frac{\partial}{\partial x_{i}}(A_{i} \frac{\partial}{\partial x_{i}} m)=-\sum_{i=1}^{d}\frac{\partial}{\partial x_{i}} z_{i}+\sum_{i=1}^{d}\frac{\partial}{\partial x_{i}} f_{i}(\bar{u}+m),\\
&\frac{\partial}{\partial t}z_{k}+\dfrac{1}{\varepsilon^2}z_{k}= A_{k} \frac{\partial}{\partial x_{k}}\big( \sum_{i=1}^{d}\frac{\partial}{\partial x_{i}}(A_{i}  \frac{\partial}{\partial x_{i}} m) \big)-A_{k} \frac{\partial}{\partial x_{k}} \sum_{i=1}^{d}\frac{\partial}{\partial x_{i}} z_{i}\\
&\quad\quad+ A_{k} \frac{\partial}{\partial x_{k}} (\sum_{i=1}^{d}\frac{\partial}{\partial x_{i}} f_{i}(\bar{u}+m))-\frac{1}{\var}f'_{k}(\bar{u}+m)\sum_{i=1}^{d}\frac{\partial}{\partial x_{i}} w_{i},\quad k=1,2,...,d.
 \end{aligned}
 \right.
\end{equation}
In order to control some nonlinear terms in \eqref{JXDZ}, for example, $\frac{\partial}{\partial x_{i}} f_{i}(\bar{u}+m)$, we need to perform the additional $\dot{B}^{\frac{d}{2}-1}_{2,1}$-estimates in low frequencies compared to the works \cite{c1,c2}.


We first establish the $\dot{B}^{\frac{d}{2}-1}_{2,1}$-estimates of $(m,z)$. For $u$, applying \eqref{lhl} and the regularity estimate \eqref{maximal1} to the parabolic equation $\eqref{JXDZ}_{1}$, we have
\begin{equation}\label{ud21}
\begin{aligned}
&\|m\|_{\widetilde{L}^{\infty}_{t}(\dot{B}^{\frac{d}{2}-1}_{2,1})}^{\ell}+\|m\|_{L^1_{t}(\dot{B}^{\frac{d}{2}+1}_{2,1})}^{\ell}\\
&\lesssim \|m_{0}\|_{\dot{B}^{\frac{d}{2}-1}_{2,1}}^{\ell}+\sum_{i=1}^{d}  \|\frac{\partial}{\partial x_{i}} z_{i}\|_{L^1_{t}(\dot{B}^{\frac{d}{2}-1}_{2,1})}^{\ell}+\sum_{i=1}^{d}\|\frac{\partial}{\partial x_{i}}  f_{i}(\bar{u}+m)\|_{L^1_{t}(\dot{B}^{\frac{d}{2}-1}_{2,1})}^{\ell}
\\&\lesssim \|m_{0}\|_{\dot{B}^{\frac{d}{2}-1}_{2,1}}^{\ell}+2^{J_{\var}}\|z\|_{L^1_{t}(\dot{B}^{\frac{d}{2}-1}_{2,1})}^{\ell}+\| f(\bar{u}+m)-f(\bar{u})\|_{L^1_{t}(\dot{B}^{\frac{d}{2}}_{2,1})}^{\ell}.
\end{aligned}
\end{equation}
As for $z$, it follows from \eqref{lhl} and the regularity estimate \eqref{maximal11} to the equation $\eqref{JXDZ}_{2}$ that
\begin{equation}\label{zd2-1}
\begin{aligned}
&\var\|z\|_{\widetilde{L}^{\infty}_{t}(\dot{B}^{\frac{d}{2}-1}_{2,1})}^{\ell}+\frac{1}{\varepsilon}\|z\|_{L^1_{t}(\dot{B}^{\frac{d}{2}-1}_{2,1})}^{\ell}\\
&\quad\lesssim\|(m_{0},w_{0})\|_{\dot{B}^{\frac{d}{2}-1}_{2,1}}^{\ell}+\|m_{0}\|_{\dot{B}^{\frac{d}{2}}_{2,1}}^{h} + \var 2^{J_{\var}}\| m \|_{L^1_{t}(\dot{B}^{\frac{d}{2}+1}_{2,1})}^{\ell}+ \var 2^{2J_{\var}}\|  z \|_{L^1_{t}(\dot{B}^{\frac{d}{2}-1}_{2,1})}^{\ell}\\
&\quad\quad+\var 2^{J_{\var}}  \| f(\bar{u}+m)-f(\bar{u})\|_{L^1_{t}(\dot{B}^{\frac{d}{2}}_{2,1})}^{\ell} +\sum_{i=1}^{d} \|f'_{k}(\bar{u}+m)\frac{\partial}{\partial x_{i}} w_{i} \|_{L^1_{t}(\dot{B}^{\frac{d}{2}-1}_{2,1})}^{\ell}.
\end{aligned}
\end{equation}
To estimate the nonlinear term $f$, we deduce from \eqref{lhl}, the  quadratic composition estimate \eqref{q1} and the embedding $\dot{B}^{\frac{d}{2}}_{2,1}\hookrightarrow L^{\infty}$ that
\begin{equation}\label{fue}
\left\{
\begin{aligned}
&\|f(\bar{u}+m)-f(\bar{u})\|_{\widetilde{L}^{\infty}_{t}(\dot{B}^{\frac{d}{2}-1}_{2,1})}^{\ell}\\
&\quad\leq C_{u} \|m\|_{L^{\infty}_{t}(\dot{B}^{\frac{d}{2}}_{2,1})}\big(\|m\|_{\widetilde{L}^{\infty}_{t}(\dot{B}^{\frac{d}{2}-1}_{2,1})}^{\ell}+\var\|m\|_{\widetilde{L}^{\infty}_{t}(\dot{B}^{\frac{d}{2}}_{2,1})}^{h}\big)\lesssim X(t)^2,\\
 &   \| f(\bar{u}+m)-f(\bar{u})\|_{L^1_{t}(\dot{B}^{\frac{d}{2}}_{2,1})}^{\ell}\\
 &\quad\leq C_{m} \| m\|_{\widetilde{L}^2_{t}(\dot{B}^{\frac{d}{2}}_{2,1})}^2\lesssim \Big( \big(\| m\|_{\widetilde{L}^2_{t}(\dot{B}^{\frac{d}{2}}_{2,1})}^{\ell} \big)^2+\big(\frac{1}{\var}\| m\|_{\widetilde{L}^2_{t}(\dot{B}^{\frac{d}{2}}_{2,1})}^{h} \big)^2\Big) \lesssim X(t)^2.
\end{aligned}
\right.
\end{equation}
Note that the above constant $C_{m}>0$ in \eqref{fue} depends on $\|m\|_{L^\infty_t(L^\infty)}$ and is uniformly bounded due to \eqref{apriori}. In addition, using \eqref{apriori}, \eqref{F1} and the product law $\dot{B}^{\frac{d}{2}-1}_{2,1}\times\dot{B}^{\frac{d}{2}}_{2,1}\hookrightarrow \dot{B}^{\frac{d}{2}-1}_{2,1}$ ($d\geq2$) in \eqref{uv2}, one sees that the last term on the right-hand side of \eqref{zd2-1} can be controlled by
\begin{equation}\nonumber
\begin{aligned}
&\sum_{i=1}^{d} \|f'_{k}(\bar{u}+m)\frac{\partial}{\partial x_{i}} w_{i} \|_{L^1_{t}(\dot{B}^{\frac{d}{2}-1}_{2,1})}^{\ell}\\
&\quad \lesssim \|f_{k}'(\bar{u}+m)\|_{L^{\infty}_{t}(\dot{B}^{\frac{d}{2}}_{2,1})} \sum_{i=1}^{d}\|\frac{\partial}{\partial x_{i}} w_{i}\|_{L^1_{t}(\dot{B}^{\frac{d}{2}-1}_{2,1})}\\
&\quad \lesssim \|m\|_{\widetilde{L}^{\infty}_{t}(\dot{B}^{\frac{d}{2}}_{2,1})} (\frac{1}{\var} \|w\|_{L^1_{t}(\dot{B}^{\frac{d}{2}}_{2,1})}^{\ell}+\frac{1}{\var^2} \|w\|_{L^1_{t}(\dot{B}^{\frac{d}{2}}_{2,1})}^{h}) \lesssim X(t)^2.
\end{aligned}
\end{equation}
Inserting the above estimates into \eqref{ud21}-\eqref{zd2-1}, we obtain
\begin{equation}\label{lowd2111}
\begin{aligned}
&\|m\|_{\widetilde{L}^{\infty}_{t}(\dot{B}^{\frac{d}{2}-1}_{2,1})}^{\ell}+ \|m\|_{L^1_{t}(\dot{B}^{\frac{d}{2}+1}_{2,1})}^{\ell}+\var\|z\|_{\widetilde{L}^{\infty}_{t}(\dot{B}^{\frac{d}{2}-1}_{2,1})}^{\ell}+\frac{1}{\varepsilon}\|z\|_{L^1_{t}(\dot{B}^{\frac{d}{2}-1}_{2,1})}^{\ell}\\
&\quad\lesssim X_{0}+\var 2^{J_{\var}}\| m \|_{L^1_{t}(\dot{B}^{\frac{d}{2}+1}_{2,1})}^{\ell}+2^{J_{\var}}(1+\var 2^{J_{\var}})\|  z \|_{L^1_{t}(\dot{B}^{\frac{d}{2}-1}_{2,1})}^{\ell}+X(t)^2.
\end{aligned}
\end{equation}
Since the threshold $J_{\var}$ given by \eqref{Jvar} satisfies $\var 2^{J_{\var}}=2^{-k_{0}}$, we choose a suitably large constant $k_{0}>0$ in \eqref{lowd2111} to show
\begin{equation}\label{lowd211}
\begin{aligned}
&\|m\|_{\widetilde{L}^{\infty}_{t}(\dot{B}^{\frac{d}{2}-1}_{2,1})}^{\ell}+ \|m\|_{L^1_{t}(\dot{B}^{\frac{d}{2}+1}_{2,1})}^{\ell}+\var\|z\|_{\widetilde{L}^{\infty}_{t}(\dot{B}^{\frac{d}{2}-1}_{2,1})}^{\ell}+\frac{1}{\varepsilon}\|z\|_{L^1_{t}(\dot{B}^{\frac{d}{2}-1}_{2,1})}^{\ell}\\
&\quad\lesssim X_{0}+X(t)^2.
\end{aligned}
\end{equation}
By interpolation, we also gain
\begin{equation*}
\begin{aligned}
&\|m\|_{\widetilde{L}^{2}_{t}(\dot{B}^{\frac{d}{2}}_{2,1})}^{\ell}\lesssim \big( \|m\|_{\widetilde{L}^{\infty}_{t}(\dot{B}^{\frac{d}{2}-1}_{2,1})}^{\ell}\big)^{\frac{1}{2}}\big( \|m\|_{L^1_{t}(\dot{B}^{\frac{d}{2}+1}_{2,1})}^{\ell}\big)^{\frac{1}{2}}\lesssim  X_{0}+X(t)^2.
\end{aligned}
\end{equation*}
Thence, to recover the information on $v$, we deduce from \eqref{fue}, \eqref{lowd211} and the fact that $ w_{i}= \var z_{i}-\var A_{i}\frac{\partial} {\partial x_{i}} m+\var\big( f_{i}(\bar{u}+m)-f_{i}(\bar{u}) \big)$ ($i=1,2,...,d$) that
\begin{equation}\label{lowd212}
\begin{aligned}
\| w\|_{\widetilde{L}^{\infty}_{t}(\dot{B}^{\frac{d}{2}-1}_{2,1})}^{\ell}
&\lesssim \var \| m\|_{\widetilde{L}^{\infty}_t(\dot{B}^{\frac{d}{2}}_{2,1})}^{\ell}+\var\| z\|_{\widetilde{L}^{\infty}_{t}(\dot{B}^{\frac{d}{2}-1}_{2,1})}^{\ell}+\var\|f(\bar{u}+m)-f(\bar{u})\|_{\widetilde{L}^{\infty}_{t}(\dot{B}^{\frac{d}{2}-1}_{2,1})}^{\ell}\\
&\lesssim \| m\|_{\widetilde{L}^{\infty}_t(\dot{B}^{\frac{d}{2}-1}_{2,1})}^{\ell}+\var\| z\|_{\widetilde{L}^{\infty}_{t}(\dot{B}^{\frac{d}{2}-1}_{2,1})}^{\ell}+\var\|f(\bar{u}+m)-f(\bar{u})\|_{\widetilde{L}^{\infty}_{t}(\dot{B}^{\frac{d}{2}-1}_{2,1})}^{\ell}\\
&\lesssim X_{0}+X(t)^2.
\end{aligned}
\end{equation}
 Similarly, it holds that
 \begin{equation}\label{lowd214}
\begin{aligned}
\frac{1}{\var}\|w\|_{L^{1}_{t}(\dot{B}^{\frac{d}{2}}_{2,1})}^{\ell}
&\lesssim \| m\|_{L^{1}_t(\dot{B}^{\frac{d}{2}+1}_{2,1})}^{\ell}+\|z\|_{L^1_{t}(\dot{B}^{\frac{d}{2}-1}_{2,1})}^{\ell}+\|f(\bar{u}+m)-f(\bar{u})\|_{L^1_{t}(\dot{B}^{\frac{d}{2}}_{2,1})}^{\ell}\\&\lesssim X_{0}+X(t)^2.
\end{aligned}
\end{equation}



Finally, we need to derive the uniform low-frequency estimates of $(m,w)$ in $B^{\frac d2}_{2,1}$ so as to control $f$ in \eqref{fue}. We obtain
\begin{equation}\label{lowd2150}
\begin{aligned}
&\|m\|_{\widetilde{L}^{\infty}_{t}(\dot{B}^{\frac{d}{2}}_{2,1})}^{\ell}+\|m\|_{\widetilde{L}^{2}_{t}(\dot{B}^{\frac{d}{2}+1}_{2,1})}^{\ell}+ \|m\|_{L^1_{t}(\dot{B}^{\frac{d}{2}+2}_{2,1})}^{\ell}+\var\|z\|_{\widetilde{L}^{\infty}_{t}(\dot{B}^{\frac{d}{2}}_{2,1})}^{\ell}+\frac{1}{\varepsilon}\|z\|_{L^1_{t}(\dot{B}^{\frac{d}{2}}_{2,1})}^{\ell}\\
&\quad\lesssim \|(m_{0},w_{0})\|_{\dot{B}^{\frac {d}{2}}_{2,1}}+\|  f(\bar{u}+m)-f(\bar{u})\|^\ell_{L^1_{t}(\dot{B}^{\frac d2+1}_{2,1})}+\sum_{i=1}^{d} \|f'_{k}(\bar{u}+m)\frac{\partial}{\partial x_{i}} w_{i} \|_{L^1_{t}(\dot{B}^{\frac{d}{2}}_{2,1})}^{\ell}.
\end{aligned}
\end{equation}
By \eqref{apriori} and \eqref{q1} one has that
\begin{equation}\nonumber
\begin{aligned}
&\|  f(\bar{u}+m)-f(\bar{u})\|^\ell_{L^1_{t}(\dot{B}^{\frac d2+1}_{2,1})}\\
&\quad \lesssim \|m\| _{L^{\infty}_{t}(L^{\infty})}( \|m\|_{L^1_{t}(\dot{B}^{\frac{d}{2}+1}_{2,1})}^{\ell}+\frac{1}{\var}\|m\|_{L^1_{t}(\dot{B}^{\frac{d}{2}}_{2,1})}^{h})\\
&\quad \lesssim \|m\| _{L^{\infty}_{t}(\dot{B}^{\frac{d}{2}}_{2,1})}( \|m\|_{L^1_{t}(\dot{B}^{\frac{d}{2}+1}_{2,1})}^{\ell}+\frac{1}{\var^2}\|m\|_{L^1_{t}(\dot{B}^{\frac{d}{2}}_{2,1})}^{h})\lesssim X(t)^2.
\end{aligned}
\end{equation}
And it also holds that
\begin{equation}\nonumber
\begin{aligned}
&\sum_{i=1}^{d} \|f'_{k}(\bar{u}+m)\frac{\partial}{\partial x_{i}} w_{i} \|_{L^1_{t}(\dot{B}^{\frac{d}{2}}_{2,1})}^{\ell}\\
&\quad \lesssim \frac{1}{\var}\sum_{i=1}^{d} \|f'_{k}(\bar{u}+m)\frac{\partial}{\partial x_{i}} w_{i} \|_{L^1_{t}(\dot{B}^{\frac{d}{2}-1}_{2,1})}^{\ell}\\
&\quad \lesssim \|m\|_{\widetilde{L}^{\infty}_{t}(\dot{B}^{\frac{d}{2}}_{2,1})} (\frac{1}{\var} \|w\|_{L^1_{t}(\dot{B}^{\frac{d}{2}}_{2,1})}^{\ell}+\frac{1}{\var^2} \|w\|_{L^1_{t}(\dot{B}^{\frac{d}{2}}_{2,1})}^{h}) \lesssim X(t)^2.
\end{aligned}
\end{equation}
Gathering the above estimates, we gain
\begin{equation}\label{lowd2151}
\begin{aligned}
&\|m\|_{\widetilde{L}^{\infty}_{t}(\dot{B}^{\frac{d}{2}}_{2,1})}^{\ell}+\|m\|_{\widetilde{L}^{2}_{t}(\dot{B}^{\frac{d}{2}+1}_{2,1})}^{\ell}+ \|m\|_{L^1_{t}(\dot{B}^{\frac{d}{2}+2}_{2,1})}^{\ell}+\var\|z\|_{\widetilde{L}^{\infty}_{t}(\dot{B}^{\frac{d}{2}}_{2,1})}^{\ell}+\frac{1}{\varepsilon}\|z\|_{L^1_{t}(\dot{B}^{\frac{d}{2}}_{2,1})}^{\ell}\\
&\quad\lesssim X_{0}+X(t)^2.
\end{aligned}
\end{equation}
Similarly to \eqref{lowd212}-\eqref{lowd214}, with the help of \eqref{lowd2151} and $\eqref{JXDZ}_{2}$, one can show
\begin{equation}\label{lowd2152}
\begin{aligned}
&\| w\|_{\widetilde{L}^{\infty}_{t}(\dot{B}^{\frac{d}{2}}_{2,1})}^{\ell}+\frac{1}{\var}\|w\|_{L^1_{t}(\dot{B}^{\frac{d}{2}+1}_{2,1})}^{\ell}\lesssim X_{0}+X(t)^2.
\end{aligned}
\end{equation}

\subsubsection{High-frequency analysis} \label{subsectionhigh}
\leavevmode\par\medbreak

The dissipative structures of \eqref{JXD} in high frequencies are analysed as follows. For $j\geq J_\varepsilon-1$, applying $\ddj$ to \eqref{JXD}, we obtain
\begin{equation} \label{JXD1}
    \left\{
    \begin{aligned}
&\frac{\partial} {\partial t}\ddj m+\frac{1}{\var}\sum_{i=1}^{d} \frac{\partial} {\partial x_{i}} \ddj w_{i}=0, \\
&\frac{\partial} {\partial t} \ddj w_{i}+\frac{1}{\var} A_{i}\frac{\partial} {\partial x_{i}} \ddj m+\frac{1}{\var^2}\ddj w_{i}= \frac{1}{\var} \ddj \big(f_{i}(\bar{u}+m)-f_{i}(\bar{u}) \big),\quad  i=1,2,...,d.
\end{aligned}
\right.
\end{equation}
Taking the $L^2$-inner product of $\eqref{JXD1}_{1}$ with $\ddj u $ and integrating by parts, we have
\begin{equation}\label{Luj}
\frac{1}{2}\dfrac{d}{dt}\|\ddj m\|_{L^2}^2-\frac{1}{\var}\sum_{i=1}^{d} \int_{\mathbb{R}^{d}} \ddj w_{i} \cdot\frac{\partial} {\partial x_{i}} \ddj m dx=0.
\end{equation}
Meanwhile, taking the $L^2$-inner product of $\eqref{JXD1}_{1}$ with $\dfrac{1}{a_{i}} \ddj v_{i} $ and summing over $1\leq i\leq d$, we also get
\begin{equation}\label{Luj1}
\begin{aligned}
  &\sum_{i=1}^{d}\frac{1}{2a_{i}}\dfrac{d}{dt} \|\ddj  w_{i}\|_{L^2}^2+\frac{1}{\var}\sum_{i=1}^{d} \int_{\mathbb{R}^{d}} \ddj w_{i} \cdot \frac{\partial}{\partial x_{i}} \ddj m dx+\frac{1}{\var^2}\sum_{i=1}^{d}\frac{1}{a_{i}} \| \ddj w_{i}\|_{L^2}^2\\
  &\quad=\frac{1}{\var}\sum_{i=1}^{d}\frac{1}{a_{i}}\int_{\mathbb{R}^{d}} \ddj \big(f_{i}(\bar{u}+m)-f_{i}(\bar{u}) \big) \cdot \ddj w_{i}dx.
  \end{aligned}
\end{equation}
In addition, in order to derive dissipative estimates for $u$, one has to perform the following cross estimates
\begin{equation}\label{Lvj}
\begin{aligned}
&\frac{1}{\var}\sum_{i=1}^{d}\frac{1}{a_{i}}\dfrac{d}{dt}\int_{\mathbb{R}^{d}} \ddj  w_{i} \cdot \frac{\partial} {\partial x_{i}} \ddj m_{i} dx+\frac{1}{\varepsilon^2}\sum_{i=1}^{d}\|\frac{\partial} {\partial x_{i}} \ddj  m_{i}\|_{L^2}^2\\
&\quad\quad-\frac{1}{\varepsilon^2} \sum_{i=1}^{d} \frac{1}{a_{i}} \int_{\mathbb{R}^{d}} \frac{\partial} {\partial x_{i}}  \ddj w_i  \sum_{i=1}^{k} \frac{\partial} {\partial x_{k}}  \ddj w_{k}  dx+\frac{1}{\varepsilon^3}\sum_{i=1}^{d} \frac{1}{a_{i}}\int_{\mathbb{R}^{d}} \ddj w_{i}\cdot \frac{\partial} {\partial x_{i}} \ddj m_{i} dx\\
&\quad=\frac{1}{\varepsilon}\sum_{i=1}^{d} \frac{1}{a_{i}} \int_{\mathbb{R}^{d}} \ddj \big(f_{i}(\bar{u}+m)-f_{i}(\bar{u})\big) \cdot  \frac{\partial} {\partial x_{i}} \ddj m_{i}  dx.
\end{aligned}
\end{equation}
For any $j\geq J_{\var}-1$, the linear combination of \eqref{Luj}-\eqref{Lvj} leads to
\begin{equation}
\begin{aligned}
    &\frac{d}{dt}\mathcal{L}_{j}^2(t)+\frac{1}{\varepsilon^2}\mathcal{H}^2_{j}(t)\\
    &\leq \sum_{i=1}^{d}\frac{1}{a_{i}}\|\ddj \big(f_{i}(\bar{u}+m)-f_{i}(\bar{u}) \big) \|_{L^2} \big( \frac{1}{\var}\|\ddj w\|_{L^2}+ \frac{2^{-2j}\eta}{\varepsilon^2}  \|\frac{\partial} {\partial x_{i}} \ddj m_{i}\|_{L^2}\big),\label{Ly}
    \end{aligned}
    \end{equation}
where the Lyapunov functional $\mathcal{L}^2_j(t)$ and its dissipation rate $\mathcal{H}^2_j(t)$ are defined by
\begin{equation}\nonumber
\left\{
\begin{aligned}
&\mathcal{L}^2_j(t):=\frac{1}{2}\|\ddj m \|_{L^2}^2+\sum_{i=1}^{d}\frac{1}{2a_{i}}\|\ddj w_{i}\|_{L^2}^2+\frac{2^{-2j}\eta}{\var}\sum_{i=1}^{d}\frac{1}{a_{i}}\int_{\mathbb{R}^{d}} \ddj  w_{i} \cdot \frac{\partial} {\partial x_{i}} \ddj m_i dx ,\\
&\mathcal{H}^2_{j}(t):=\frac{1}{\var^2}\sum_{i=1}^{d}\frac{1}{a_{i}} \| \ddj w_{i}\|_{L^2}^2+\frac{2^{-2j}\eta}{\varepsilon^2} \sum_{i=1}^{d} \Big( \|\frac{\partial} {\partial x_{i}} \ddj  m_i\|_{L^2}^2\\
&\quad\quad\quad\quad- \frac{1}{a_{i}} \int_{\mathbb{R}^{d}} \frac{\partial} {\partial x_{i}}  \ddj w_{i}  \cdot \sum_{i=1}^{k} \frac{\partial} {\partial x_{k}}  \ddj w_{k}  dx+ \frac{1}{\var a_{i}}\int_{\mathbb{R}^{d}} \ddj w_{i}\cdot \frac{\partial} {\partial x_{i}} \ddj m_i dx \Big),
\end{aligned}
\right.
\end{equation}
for a constant $\eta>0$ to be adjusted later on.

\begin{Lemma} \label{Lyaeq}
For any $j\geq J_{\var}-1$, there exists a suitable small constant $\eta>0$ such that
\begin{equation}\label{LHsim}
\left\{
\begin{aligned}
&\mathcal{L}^2_j(t)\sim \|\ddj(m,w)\|_{L^2}^2,\\
& \mathcal{H}^2_{j}(t)\gtrsim \frac{1}{\var^2} \|\ddj(m,w)\|_{L^2}^2.
\end{aligned}
\right.
\end{equation}
\end{Lemma}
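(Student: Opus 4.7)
The plan is to establish the two claims as Young-inequality arguments in which the cross terms introduced via the auxiliary parameter $\eta$ are absorbed by the diagonal terms, crucially exploiting the high-frequency threshold $j\geq J_\varepsilon-1$ and Bernstein's inequality.

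For the equivalence $\mathcal{L}_j^2(t)\sim \|\ddj(m,w)\|_{L^2}^2$, I would observe that the only non-diagonal contribution in the definition of $\mathcal{L}_j^2$ is the cross term
$$\frac{2^{-2j}\eta}{\varepsilon}\sum_{i=1}^d \frac{1}{a_i}\int_{\R^d}\ddj w_i\cdot \partial_{x_i}\ddj m\,dx.$$
By Cauchy--Schwarz and the Bernstein estimate $\|\partial_{x_i}\ddj m\|_{L^2}\lesssim 2^j\|\ddj m\|_{L^2}$, this cross term is controlled by $\frac{2^{-j}\eta}{\varepsilon}\|\ddj w\|_{L^2}\|\ddj m\|_{L^2}$. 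The definition \eqref{Jvar} enforces $\varepsilon 2^{J_\varepsilon}=2^{-k_0}$, so on $\{j\geq J_\varepsilon-1\}$ we have $\varepsilon^{-1}2^{-j}\leq 2^{k_0+1}$, a constant independent of $\varepsilon$. The cross term is therefore bounded by $C(k_0)\,\eta\,\|\ddj(m,w)\|_{L^2}^2$, and taking $\eta$ small enough (depending on $k_0$ but not on $\varepsilon$ or $j$) yields the desired two-sided comparison with the diagonal norm.

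For the coercivity $\mathcal{H}_j^2(t)\gtrsim \varepsilon^{-2}\|\ddj(m,w)\|_{L^2}^2$, the starting point is that $\mathcal{H}_j^2(t)$ already contains the two positive contributions $\varepsilon^{-2}\sum_i a_i^{-1}\|\ddj w_i\|_{L^2}^2$ and, by the lower Bernstein estimate, $\frac{2^{-2j}\eta}{\varepsilon^2}\sum_i\|\partial_{x_i}\ddj m\|_{L^2}^2\gtrsim \frac{\eta}{\varepsilon^2}\|\ddj m\|_{L^2}^2$, so their sum already dominates $c\,\varepsilon^{-2}(\|\ddj w\|_{L^2}^2+\eta\|\ddj m\|_{L^2}^2)$. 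The two remaining integral terms are of indefinite sign; I would estimate the pure-$w$ contribution by Cauchy--Schwarz and Bernstein as $\lesssim \frac{\eta}{\varepsilon^2}\|\ddj w\|_{L^2}^2$, and the mixed $(w,m)$ contribution as
$$\frac{2^{-2j}\eta}{\varepsilon^3}\|\ddj w\|_{L^2}\|\nabla\ddj m\|_{L^2}\lesssim \frac{2^{-j}\eta}{\varepsilon^3}\|\ddj w\|_{L^2}\|\ddj m\|_{L^2}\lesssim \frac{\eta}{\varepsilon^2}\|\ddj w\|_{L^2}\|\ddj m\|_{L^2},$$
invoking once again $\varepsilon^{-1}2^{-j}\lesssim 1$ on high frequencies. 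A final application of Young's inequality together with the smallness of $\eta$ allows absorbing both indefinite contributions into the two positive ones, leaving the claimed lower bound.

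The main obstacle I foresee is precisely the tension between the $1/\varepsilon$ factors present in $\mathcal{L}_j^2$ and $\mathcal{H}_j^2$ and the dyadic scale $2^j$: without the careful threshold choice $J_\varepsilon\sim -\log_2\varepsilon$ with $k_0$ large, the cross terms would carry an unbounded factor $\varepsilon^{-1}2^{-j}$ that would ruin both estimates uniformly in $\varepsilon$. The definition \eqref{Jvar} is tailored exactly so that $\varepsilon^{-1}2^{-j}$ remains uniformly bounded on $\{j\geq J_\varepsilon-1\}$, which is what permits $\eta$ to be chosen independently of $\varepsilon$ and $j$ and allows the standard Lyapunov scheme to go through.
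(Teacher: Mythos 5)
Your argument is correct and follows essentially the same route as the paper: control the $\eta$-weighted cross terms by Cauchy--Schwarz, Bernstein, and the key high-frequency bound $\varepsilon^{-1}2^{-j}\lesssim 1$ on $\{j\geq J_\varepsilon-1\}$ coming from \eqref{Jvar}, then absorb them by choosing $\eta$ small independently of $\varepsilon$ and $j$. Your closing remark about the role of the threshold is exactly the point the paper's proof exploits, so nothing is missing.
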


\begin{proof}
By Bernstein’s inequality and $2^{-j}\lesssim \var$ for $j\geq J_{\var}-1$, it follows that
\begin{align*} 
   \frac{2^{-2j}}{\var}\int_{\mathbb{R}^{d}} \ddj  w_{i} \cdot \frac{\partial} {\partial x_{i}} \ddj m_i dx  &\lesssim \frac{2^{-j}}{\var}\|\ddj (m, w)\|_{L^2}^2\lesssim \|\ddj (m, w)\|_{L^2}^2,
\end{align*}
which implies
\begin{equation}\nonumber
\begin{aligned}
&\frac{1}{2}(1-C\eta)  \|\ddj(m,w)\|_{L^2}^2\leq \mathcal{L}^2_j(t)\leq \frac{1}{2}(1+C\eta)\|\ddj(m,w)\|_{L^2}^2
\end{aligned}
\end{equation}
for some constant $C>1$ independent of time and $\var$.

On the other hand, for any $j\geq J_{\var}-1$, since the cross term satisfies
\begin{equation}\nonumber
\begin{aligned}
&\frac{2^{-2j}}{\varepsilon^2} \sum_{i=1}^{d} \Big( \|\frac{\partial} {\partial x_{i}} \ddj  m_i\|_{L^2}^2- \frac{1}{a_{i}} \int_{\mathbb{R}^{d}} \frac{\partial} {\partial x_{i}}  \ddj w_{i} \cdot \sum_{i=1}^{k} \frac{\partial} {\partial x_{k}}  \ddj w_{k}  dx\\
&\quad\quad+ \frac{1}{\var a_{i}}\int_{\mathbb{R}^{d}} \ddj w_{i}\cdot \frac{\partial} {\partial x_{i}} \ddj m_i dx \Big)\\
&\quad\gtrsim  \frac{1}{\varepsilon^2} \|\ddj m\|_{L^2}^2- \frac{1}{\varepsilon^2}\|\ddj w\|_{L^2},
\end{aligned}
\end{equation}
the following estimate holds:
\begin{equation}\nonumber
\begin{aligned}
&\mathcal{H}^2_{j}(t)\geq   \frac{1}{\var^2}(\frac{1}{\max_{1\leq i\leq d}a_{i}}-C\eta)\|\ddj w\|_{L^2}^2 +\frac{\eta}{C\var^2} \|\ddj m\|_{L^2}^2.
\end{aligned}
\end{equation}
By choosing $\eta=\dfrac{1}{ 2C\max_{1\leq i\leq d}a_{i} }$, we get \eqref{LHsim}.
\end{proof}

\vspace{5ex}

Then, by virtue of \eqref{Ly}, the Bernstein inequality and Lemma \ref{Lyaeq}, we end up with
\begin{align}\label{LyaHf}
    \frac{d}{dt}\mathcal{L}_{j}^2(t)+\frac{1}{\var^2}\mathcal{L}_{j}^2(t)&\lesssim  \frac{1}{\varepsilon }\|\ddj \big(f_{i}(\bar{u}+m)-f_{i}(\bar{u}) \big) \|_{L^2}  \mathcal{L}_{j}(t),\quad j\geq J_{\var}-1.
\end{align}
Dividing the two sides of \eqref{LyaHf} by $(\mathcal{L}_{j}^2(t)+\eta)^{\frac{1}{2}}$, integrating the resulting equation over $[0,t]$ and then taking the limit as $\eta\rightarrow0$, we have
\begin{align*}
   &\|\ddj( m,w)\|_{L^2}+\frac{1}{\varepsilon^2}\int_0^t\|\ddj( m,w)\|_{L^2}d\tau\\
   &\quad\leq  \|\dot{\Delta}_{j}( m_{0}, w_{0})\|_{L^2}+ \frac{1}{\varepsilon }\int_0^t\|\ddj \big(f_{i}(\bar{u}+m)-f_{i}(\bar{u}) \big) \|_{L^2}d\tau,\quad j\geq J_{\var}-1.
\end{align*}
Then, a classical procedure leads to
\begin{equation}\nonumber
\begin{aligned}
&\|( m,w)\|_{\widetilde{L}^{\infty}_{t}(\dot{B}^{\frac{d}{2}}_{2,1})}^{h}+\frac{1}{\varepsilon^2}\|(m,w)\|_{L^1_{t}(\dot{B}^{\frac{d}{2}}_{2,1})}^{h}\lesssim \|( m_0,w_0)\|_{\dot{B}^{\frac{d}{2}}}^{h}+\frac{1}{\varepsilon }\|f(\bar{u}+m)-f(\bar{u})\|_{L^1_t(B^{\frac d2}_{2,1})}^h.
\end{aligned}
\end{equation}
Since $f(u)$ is quadratic, an application of \eqref{apriori} and \eqref{q2} with $\sigma=\frac{d}{2}+1$ yields
\begin{equation}\nonumber
\begin{aligned}
\|f(\bar{u}+m)-f(\bar{u})\|_{\dot{B}^{\frac{d}{2}}_{2,1}}^{h}&\lesssim \|m\|_{L^{\infty}}\big{(} \varepsilon\|m\|_{\dot{B}^{\frac{d}{2}+1}_{2,1}}^{\ell}+\|m\|_{\dot{B}^{\frac d2}_{2,1}}^{h}\big{)},
\end{aligned}
\end{equation}
which gives rise to
\begin{equation}\nonumber
\begin{aligned}
&\frac{1}{\varepsilon}\|f(\bar{u}+m)-f(\bar{u})\|_{L^1_t(B^{\frac d2}_{2,1})}^h\\
&\quad\lesssim\|m\|_{\widetilde{L}^{2}_{t}(\dot{B}^{\frac{d}{2}}_{2,1})}\big{(} \|m\|_{\widetilde{L}^2_{t}(\dot{B}^{\frac{d}{2}+1}_{2,1})}^{\ell}+\frac{1}{\var}\|m\|_{\widetilde{L}^2_{t}(\dot{B}^{\frac{d}{2}}_{2,1})}^{h}\big)\\
&\quad\lesssim \big{(} \|m\|_{\widetilde{L}^2_{t}(\dot{B}^{\frac{d}{2}+1}_{2,1})}^{\ell}+\frac{1}{\var}\|m\|_{\widetilde{L}^2_{t}(\dot{B}^{\frac{d}{2}}_{2,1})}^{h}\big)^2\lesssim X(t)^2.
\end{aligned}
\end{equation}
Therefore, it holds that
\begin{equation}\label{Highe}
\begin{aligned}
\|( m,w)\|_{\widetilde{L}^{\infty}_{t}(\dot{B}^{\frac{d}{2}}_{2,1})}^{h}+\frac{1}{\varepsilon^2}\|(m,w)\|_{L^1_{t}(\dot{B}^{\frac{d}{2}}_{2,1})}^{h}&\lesssim \|(m_{0},w_{0})\|_{\dot{B}^{\frac{d}{2}}_{2,1}}^{h}+X(t)^2.
\end{aligned}
\end{equation}
The above estimates \eqref{Highe} and \eqref{lhl} also give rise to
\begin{equation}\label{Highe1}
\begin{aligned}
&\|( m,w)\|_{\widetilde{L}^{\infty}_{t}(\dot{B}^{\frac{d}{2}-1}_{2,1})}^{h}\lesssim \var \|( m,w)\|_{\widetilde{L}^{\infty}_{t}(\dot{B}^{\frac{d}{2}}_{2,1})}^{h}\lesssim  \|(m_{0},w_{0})\|_{\dot{B}^{\frac{d}{2}}_{2,1}}^{h}+X(t)^2.
\end{aligned}
\end{equation}
By H${\rm\ddot{o}}$lder's inequality, one has
\begin{equation}\nonumber
\begin{aligned}
& \|m\|_{\widetilde{L}^{2}_{t}(\dot{B}^{\frac{d}{2}}_{2,1})}^{h}\lesssim \big(\|m\|_{\widetilde{L}^{\infty}_{t}(\dot{B}^{\frac{d}{2}}_{2,1})}^{h} \big) \big( \frac{1}{\var^2}\|m\|_{L^{1}_{t}(\dot{B}^{\frac{d}{2}}_{2,1})}^{h}\big)^{\frac{1}{2}}\lesssim \|(m_{0},w_{0})\|_{\dot{B}^{\frac{d}{2}}_{2,1}}^{h}+X(t)^2.
\end{aligned}
\end{equation}

In order to justify the relaxation limit, we need to show that the low-frequency damped mode $z$ defined by \eqref{effective} also satisfied uniform bounds in high frequencies. Indeed, from \eqref{Highe1} and \eqref{lhl}, we have
\begin{equation}\label{Highe2}
\begin{aligned}
    \frac{1}{\varepsilon^2}\|z\|_{L^1_t(B^{\frac d2-1}_{2,1})}^h&\lesssim\frac{1}{\varepsilon^2}\| m\|_{L^1_t(\dot{B}^{\frac d2}_{2,1})}^{h}+\frac{1}{\varepsilon^3}\|w\|_{L^1_t(\dot{B}^{\frac d2-1}_{2,1})}^{h}+\frac{1}{\varepsilon^2}\|f(\bar{u}+m)-f(\bar{u})\|_{L^1_t(\dot{B}^{\frac d2-1}_{2,1})}^{h}
    \\&\lesssim \frac{1}{\varepsilon^2}\| m\|_{L^1_t(\dot{B}^{\frac d2}_{2,1})}^{h}+\frac{1}{\var^2}\|w\|_{L^1_t(\dot{B}^{\frac d2}_{2,1})}^{h}+\frac{1}{\var}\|f(\bar{u}+m)-f(\bar{u})\|_{L^1_t(\dot{B}^{\frac d2}_{2,1})}^{h}\\
    &\lesssim X_{0}+X(t)^2.
\end{aligned}
\end{equation}

\subsection{Proof of global well-posedness
}\leavevmode\par\medbreak

In this section we conclude the proof of Theorem \ref{Thm1}. For any $q\geq1$, we consider the approximate scheme for $q\geq1$ as follows
\begin{equation}\label{app}
\begin{aligned}
\frac{d}{dt}\left(                 
  \begin{matrix}  
    m^{q} \\
    w_{1}^{q}\\
    ...\\
    w_{d}^{q}
  \end{matrix}
\right)=\left(
  \begin{matrix} 
\frac{1}{\var} \sum_{i=1}^{d}\frac{\partial}{\partial x_{i}} w_{i}^{q}\\
-\frac{1}{\var}A_{1}\frac{\partial}{\partial x_{1}} m^{q}-\frac{1}{\var^2} w_{1}^{q}+\frac{1}{\var}\dot{\mathbb{E}}_{q} ( F_{1}(\bar{u}+m^{q})-F_{1}(\bar{u}))\\
...\\
-\frac{1}{\var}A_{d}\frac{\partial}{\partial x_{d}} m^{q}-\frac{1}{\var^2} w_{m}^{q}+\frac{1}{\var^2}\dot{\mathbb{E}}_{q} ( F_{d}(\bar{u}+m^{q})-F_{d}(\bar{u}))\\
    \end{matrix}
\right),
 \end{aligned}
\end{equation}
with the initial data
\begin{equation}\label{appd}
\begin{aligned}
   ( m^{q},w^{q})|_{t=0}= \dot{\mathbb{E}}_{q} (m_{0}, w_{0}),
 \end{aligned}
\end{equation}
where $\dot{\mathbb{E}}_{q}: L^2\rightarrow L^2_{q}$ is is the Friedrichs projector with  $L^2_{q}$ the set of $L^2$ functions spectrally supported in the annulus $\{\frac{1}{q}\leq|\xi|\leq q\}$. Since (\ref{app}) is a system of ordinary differential equations in $L^2_{q}\times L^2_{q}$ and locally Lipschitz with respect to the variable $(m^{q},w^{q})$ for every $n\geq 1$ due to the Bernstein inequality, it follows by the Cauchy-Lipschitz theorem in \cite[Page 124]{bahouri1} that there exists a time $T^{*}_{n}>0$ such that the problem (\ref{app})-\eqref{appd} for $q\geq1$ admits a unique solution $(m^{q},w^{q})\in C([0,T^{*}_{n}];L^2_{q})$ on $[0,T_{n}^{*}]$.

By virtue of Proposition \ref{aprioriJX} and the standard continuity arguments, we can extend the solution $(m^{q},w^{q})$ globally in time and prove that $(m^{q},w^{q})$ satisfies the uniform estimates \eqref{aprioriestimate} for any $t>0$ and $q\geq1$.


Then, we prove the strong convergence of the approximate sequence $(u^{q},v^{q})$. Let $(\widetilde{m}^{q},\widetilde{w}^{q}):=(m^{q+1}-m^{q}, w^{q+1}-w^{q})$ which satisfies
\begin{equation} \nonumber
\left\{
    \begin{aligned}
&\frac{\partial} {\partial t}\widetilde{m}^{q}+\frac{1}{\var}\sum_{i=1}^{d} \frac{\partial} {\partial x_{i}} \widetilde{w}^{q}_{i}=0, \\
&\frac{\partial} {\partial t} \widetilde{w}^{q}_{i}+\frac{1}{\var} A_{i}\frac{\partial} {\partial x_{i}} \widetilde{m}^{q}+\frac{1}{\var^2}\widetilde{w}^{q}_{i}=\frac{1}{\var}\big( f_{i}(\bar{u}+m^{q+1})-f_{i}(\bar{u}+m^{q}) \big),\quad i=1,2,...,d.
    \end{aligned}
    \right.
\end{equation}
Therefore, one can obtain
\begin{equation}\nonumber
    \begin{aligned}
    &\frac{1}{2} \frac{d}{dt} \big( \|\ddj \widetilde{m}^{q} \|_{L^2}^2+\sum_{i=1}^{d}\frac{1}{a_{i}}\|\ddj \widetilde{w}_{i}^{q}\|_{L^2}^2 \big)+\frac{1}{\var^2}\sum_{i=1}^{d}\frac{1}{ 2 a_{i}} \| \ddj \widetilde{w}_{i}^{q}\|_{L^2}^2\\
    &~\leq \frac{\max_{1\leq i\leq d} a_{i} }{2 }\|\dot{\Delta}_{j}\big(f(\bar{u}+m^{q+1})-f(\bar{u}+m^{q})\big)\|_{L^2}^2,
    \end{aligned}
\end{equation}
which yields
\begin{equation}\label{uniquee}
    \begin{aligned}
    &\|(\widetilde{m}^{q},\widetilde{w}^{q})\|_{\widetilde{L}^{\infty}_{t}(\dot{B}^{\frac{d}{2}}_{2,1})}^2+\frac{1}{\var^2}\|\widetilde{w}^{q}\|_{\widetilde{L}^2_{t}(\dot{B}^{\frac{d}{2}}_{2,1})}^2\\
    &\quad\lesssim \|(\dot{\mathbb{E}}_{n+1}-\dot{\mathbb{E}}_{q})( m_{0},w_{0})\|_{\dot{B}^{\frac{d}{2}}_{2,1}}^2+\int_{0}^{t}\|f(\bar{u}+m^{q+1})-f(\bar{u}+m^{q})\|_{\dot{B}^{\frac{d}{2}}_{2,1}}^2d\tau.
        \end{aligned}
\end{equation}
Thanks to the composition estimate \eqref{F2}, we get
\begin{equation}\label{uniquee1}
    \begin{aligned}
    \|f(\bar{u}+m^{q+1})-f(\bar{u}+m^{q})\|_{\dot{B}^{\frac{d}{2}}_{2,1}}\lesssim \|\widetilde{m}^{q}\|_{\dot{B}^{\frac{d}{2}}_{2,1}}.
        \end{aligned}
\end{equation}
By \eqref{uniquee}-\eqref{uniquee1} and the Gr\"onwall inequality, there exists a limit $(m,w)$ such that as $n\rightarrow\infty$, the following convergence holds:
\begin{equation}
    \begin{aligned}
    &(m^{q}, w^{q})\rightarrow (m,w)\quad\text{strongly in}\quad L^{\infty}(0,T;\dot{B}^{\frac{d}{2}}_{2,1}),\quad \forall T>0. 
    \end{aligned}
\end{equation}
Thus, it is easy to prove that the limit $(m,w)$ solves \eqref{rJXD} in the sense of distributions, and thanks to the uniform estimates $\eqref{aprioriestimate}$ and the Fatou property,  $(u,v):=(\bar{u}+m,\bar{v}+\frac{1}{\var}w)$ is indeed a global strong solution to the Cauchy problem of System \eqref{JXD} subject to the initial data $(u_{0},v_{0})$ and satisfies the estimates (\ref{Xt}). Similarly to the convergence of the approximate sequence, we can show the uniqueness in the space $L^{\infty}(0,T;\dot{B}^{\frac{d}{2}}_{2,1})$. The details are omitted here.

\section{Proof of Theorem \ref{Thm3}}\label{section4}
\subsection{Global existence of the viscous conservation laws}

Here we establish the global a-priori estimates of any solution $u^{*}$ to System \eqref{uheat}. By virtue of the regularity estimates \eqref{maximal1} applied to \eqref{uheat} and 
the composition estimate \eqref{F1}, we have
\begin{equation}\label{321}
\begin{aligned}
&\|u^*-\bar{u}\|_{\widetilde{L}^{\infty}_{t}(\dot{B}^{\frac{d}{2}-1}_{2,1}\cap\dot{B}^{\frac{d}{2}}_{2,1})}+\|u^*-\bar{u}\|_{L^1_{t}(\dot{B}^{\frac{d}{2}+1}_{2,1}\cap\dot{B}^{\frac{d}{2}+2}_{2,1})}\\
&\quad\lesssim  \|u_{0}^*-\bar{u}\|_{\dot{B}^{\frac{d}{2}-1}_{2,1}\cap\dot{B}^{\frac{d}{2}}_{2,1}}+\|f(u^{*})-f(\bar{u})\|_{L^1_{t}(\dot{B}^{\frac{d}{2}}_{2,1}\cap\dot{B}^{\frac{d}{2}+1}_{2,1})}.
\end{aligned}
\end{equation}
Together with an interpolation inequality it yields
\begin{equation}\label{3211}
\begin{aligned}
&\|u^*-\bar{u}\|_{\widetilde{L}^2_{t}(\dot{B}^{\frac{d}{2}}_{2,1}\cap\dot{B}^{\frac{d}{2}+1}_{2,1})}\lesssim \|u_{0}^*-\bar{u}\|_{\dot{B}^{\frac{d}{2}-1}_{2,1}\cap\dot{B}^{\frac{d}{2}}_{2,1}}+\|f(u^{*})-f(\bar{u})\|_{L^1_{t}(\dot{B}^{\frac{d}{2}}_{2,1}\cap\dot{B}^{\frac{d}{2}+1}_{2,1})}.
\end{aligned}
\end{equation}
Using the composition estimates \eqref{q1}-\eqref{q2} and interpolation \eqref{inter}, there holds
\begin{equation}\nonumber
\left\{
    \begin{aligned}
    &\|f(u^{*})-f(\bar{u})\|_{L^1_{t}(\dot{B}^{\frac{d}{2}}_{2,1})}\lesssim \|u^*-\bar{u}\|_{\widetilde{L}^2_{t}(\dot{B}^{\frac{d}{2}}_{2,1})}^2 ,\\
    &\|f(u^{*})-f(\bar{u})\|_{L^1_{t}(\dot{B}^{\frac{d}{2}+1}_{2,1})}\lesssim \|u^*-\bar{u}\|_{\widetilde{L}^2_{t}(\dot{B}^{\frac{d}{2}}_{2,1})}\|u^*-\bar{u}\|_{\widetilde{L}^2_{t}(\dot{B}^{\frac{d}{2}+1}_{2,1})}.
    \end{aligned}
    \right.
\end{equation}

Thus the left-hand sides of \eqref{321}-\eqref{3211} can be bounded
for all $t>0$ in terms of initial data verifying \eqref{smalllimit}. Thence by the Friedrichs approximation and convergence arguments, one can prove the global existence of a solution $u^{*}$ to the Cauchy problem of System \eqref{uheat} associated to the initial data $u^*_0$ and the proof of uniqueness follows from similar estimates. In addition, the law \eqref{Darcy} gives rise to
\begin{equation}\nonumber
\begin{aligned}
&\|v^*-\bar{v}\|_{L^1_{t}(\dot{B}^{\frac{d}{2}}_{2,1})}\lesssim \|u^{*}-\bar{u}\|_{L^1_{t}(\dot{B}^{\frac{d}{2}+1}_{2,1})}+\|f(u^{*})-f(\bar{u})\|_{L^1_{t}(\dot{B}^{\frac{d}{2}}_{2,1})}\lesssim  \|u_{0}^*-\bar{u}\|_{\dot{B}^{\frac{d}{2}-1}_{2,1}\cap\dot{B}^{\frac{d}{2}}_{2,1}}.
\end{aligned}
\end{equation}

\subsection{Convergence rate}

We are ready to estimate the difference between $u^*$ the solution of \eqref{uheat} and the solution $u$ of \eqref{JXD} when $u_{0}$ and $u_0^*$ satisfies \eqref{error}. Defining $(\delta u, \delta v):=(u-u^*,v-v^*)$ and using $\eqref{JXDZ}$, we have
\begin{equation} \label{diff1} 
\left\{
\begin{aligned}
&\d_t\delta u-\sum_{i=1}^{d}(A_{i}\frac{\partial}{\partial x_{i}} \delta u)=-\sum_{i=1}^{d}\frac{\partial}{\partial x_{i}} z_{i}+\sum_{i=1}^{d}\frac{\partial}{\partial x_{i}}\big(f_{i}(u)-f_{i}(u^*) \big),\\
&\delta v_{i}=-A_{i}\dfrac{\partial} {\partial x_{i}}  \delta u+f_{i}(u)-f_{i}(u^*)+Z_i,\quad i=1,2,...,d,
\end{aligned}
\right.
\end{equation}
with $Z_i=A_{i}\dfrac{\partial} {\partial x_{i}} u+v_{i}-f_{i}(u)$. Applying  \eqref{Xt} and \eqref{maximal1}  to the equation $\eqref{diff1}_{1}$, $\delta u$ can be estimated as follows:
\begin{equation}\label{deltae1}
\begin{aligned}
&\|\delta u\|_{\widetilde{L}^{\infty}_{t}(\dot{B}^{\frac{d}{2}-2}_{2,1})}+\|\delta u\|_{L^1_{t}(\dot{B}^{\frac{d}{2}}_{2,1})}\\
&\quad\lesssim \|u_{0}-u_{0}^{*}\|_{\dot{B}^{\frac{d}{2}-2}_{2,1}}+\sum_{i=1}^{d}\|\frac{\partial}{\partial x_{i}} Z_{i}\|_{L^1_{t}(\dot{B}^{\frac{d}{2}-2}_{2,1})}+\sum_{i=1}^{d}\|\frac{\partial}{\partial x_{i}}\big( f_{i}(u)-f_{i}(u^*)\big)\|_{L^1_{t}(\dot{B}^{\frac{d}{2}-2}_{2,1})}
\\ &
\quad\lesssim  \var+\sum_{i=1}^{d}\| Z_{i}\|_{L^1_{t}(\dot{B}^{\frac{d}{2}-1}_{2,1})}+\sum_{i=1}^{d}\| f_{i}(u)-f_{i}(u^*)\|_{L^1_{t}(\dot{B}^{\frac{d}{2}-1}_{2,1})}
\\ &\quad\lesssim \varepsilon+\sum_{i=1}^{d}\| f_{i}(u)-f_{i}(u^*)\|_{L^1_{t}(\dot{B}^{\frac{d}{2}-1}_{2,1})}.
\end{aligned}
\end{equation}
As $f_{i}$ is at least quadratic due to \eqref{Assumptionf}, there exists a function $g_{i}$ such that $f_{i}(u)-f_{i}(\bar{u})=(u-\bar{u})g_{i}(u)$ and $g_{i}(\bar{u})=0$.
Thus, we have
$$ f_{i}(u)-f_{i}(u^{*})=\delta u g_{i}(u)+(u^*-\bar{u})\big( g_{i}(u)-g_{i}(u^{*})\big).$$ It follows from \eqref{Xt}, \eqref{Xtlimit} and \eqref{F1}-\eqref{F2} that
\begin{equation}\label{deltae11}
\begin{aligned}
   & \| f_{i}(u)-f_{i}(u^{*})\|_{L^1_{t}(\dot{B}^{\frac{d}{2}-1}_{2,1})} \\
   &\quad \leq \| \delta u g_{i}(u)\|_{L^1_{t}(\dot{B}^{\frac{d}{2}-1}_{2,1})}+ \| (u^*-\bar{u})\big( g_{i}(u)-g_{i}(u^{*})\big)\|_{L^1_{t}(\dot{B}^{\frac{d}{2}-1}_{2,1})}
    \\&\quad \lesssim \| \delta u \|_{\widetilde{L}^2_{t}(\dot{B}^{\frac{d}{2}-1}_{2,1})}\|g_{i}(u)\|_{\widetilde{L}^2_{t}(\dot{B}^{\frac{d}{2}}_{2,1})}+\| u^*-\bar{u} \|_{\widetilde{L}^\infty_{t}(\dot{B}^{\frac{d}{2}}_{2,1})}\| g(u)-g(u^{*})\|_{L^1_{t}(\dot{B}^{\frac{d}{2}-1}_{2,1})}
    \\&\quad \lesssim\| \delta u \|_{\widetilde{L}^2_{t}(\dot{B}^{\frac{d}{2}-1}_{2,1})}\|u-\bar{u}\|_{\widetilde{L}^2_{t}(\dot{B}^{\frac{d}{2}}_{2,1})}+\| u^* -\bar{u}\|_{\widetilde{L}^\infty_{t}(\dot{B}^{\frac{d}{2}}_{2,1})}\|\delta u\|_{L^1_{t}(\dot{B}^{\frac{d}{2}-1}_{2,1})}\\
    &\quad \lesssim o(1)( \|\delta u\|_{\widetilde{L}^{2}_{t}(\dot{B}^{\frac{d}{2}-1}_{2,1})}+\|\delta u\|_{L^1_{t}(\dot{B}^{\frac{d}{2}}_{2,1})}).
\end{aligned}
\end{equation}
Then, from \eqref{deltae1}-\eqref{deltae11} and \eqref{inter} we get
\begin{equation}\label{deltae12}
\begin{aligned}
\|\delta u\|_{\widetilde{L}^{\infty}_{t}(\dot{B}^{\frac{d}{2}-2}_{2,1})}+\|\delta u\|_{L^1_{t}(\dot{B}^{\frac{d}{2}}_{2,1})}+\|\delta u\|_{\widetilde{L}^2_{t}(\dot{B}^{\frac{d}{2}-1}_{2,1})}&\lesssim \varepsilon.
\end{aligned}
\end{equation}
In addition, in view of \eqref{maximal2}, we have the $L^2$-in-time estimate of $\eqref{diff1}_{1}$ as follows
\begin{equation}\label{deltae2}
\begin{aligned}
&\|\delta u\|_{\widetilde{L}^{\infty}_{t}(\dot{B}^{\frac{d}{2}-1}_{2,1})}^{\ell}+\|\delta u\|_{\widetilde{L}^2_{t}(\dot{B}^{\frac{d}{2}}_{2,1})}^{\ell}\\
&\quad\lesssim \|u_{0}-u_{0}^{*}\|_{\dot{B}^{\frac{d}{2}-1}_{2,1}}^{\ell}+\sum_{i=1}^{d}\|\frac{\partial}{\partial x_{i}} Z_{i}\|_{\widetilde{L}^1_{t}(\dot{B}^{\frac{d}{2}-2}_{2,1})}^{\ell}+\sum_{i=1}^{d}\|\frac{\partial}{\partial x_{i}}(f_{i}(u)-f_{i}(u^*))\|_{\widetilde{L}^2_{t}(\dot{B}^{\frac{d}{2}-2}_{2,1})}^{\ell}
\\ &
\quad\lesssim  \var+\sum_{i=1}^{d}\| Z_{i}\|_{L^1_{t}(\dot{B}^{\frac{d}{2}-1}_{2,1})}+\| \delta u \|_{\widetilde{L}^{2}_{t}(\dot{B}^{\frac{d}{2}-1}_{2,1})}\|g(u)\|_{\widetilde{L}^{\infty}_{t}(\dot{B}^{\frac{d}{2}}_{2,1})}\\
&\quad\quad+\| u^* -\bar{u}\|_{\widetilde{L}^{\infty}_{t}(\dot{B}^{\frac{d}{2}}_{2,1})}\| g(u)-g(u^{*})\|_{\widetilde{L}^{2}_{t}(\dot{B}^{\frac{d}{2}-1}_{2,1})}\\&
\quad\lesssim \var.
\end{aligned}
\end{equation}
where one has used the bound \eqref{deltae12}. For the high-frequency norms we deduce directly from \eqref{Xt} and \eqref{Xtlimit} that
\begin{equation}\label{highdd}
\left\{
\begin{aligned}
&\|\delta u\|_{\widetilde{L}^{\infty}_{t}(\dot{B}^{\frac{d}{2}-1}_{2,1})}^{h}\lesssim \var\|(u-\bar{u},u^*-\bar{u})\|_{L^\infty_t(\dot{B}^{\frac{d}{2}}_{2,1})}^{h}\lesssim \var, \\
&\|\delta u\|_{\widetilde{L}^2_{t}(\dot{B}^{\frac{d}{2}}_{2,1})}^{h}\lesssim  \var \big(\|u-\bar{u}\|_{\widetilde{L}^{\infty}_t(\dot{B}^{\frac{d}{2}}_{2,1})}^{h}\big)^{\frac{1}{2}}\big( \frac{1}{\var^2}\|u-\bar{u}\|_{\widetilde{L}^1_t(\dot{B}^{\frac{d}{2}}_{2,1})}^{h}\big)^{\frac{1}{2}}+\var \|u^*-\bar{u}\|_{L^2_t(\dot{B}^{\frac{d}{2}+1}_{2,1})}^{h}\lesssim \var.
\end{aligned}
\right.
\end{equation}
Finally, $\delta v$ can be estimated by
\begin{equation}\label{deltae3}
    \begin{aligned}
    \|\delta v\|_{L^1_{t}(\dot{B}^{\frac{d}{2}-1}_{2,1})}&\lesssim \|\delta u\|_{L^1_{t}(\dot{B}^{\frac{d}{2}}_{2,1})}+\|f(u)-f(u^*)\|_{L^1_{t}(\dot{B}^{\frac{d}{2}-1}_{2,1})}+\|z\|_{L^1_{t}(\dot{B}^{\frac{d}{2}-1}_{2,1})}\\
    &\lesssim \|\delta u\|_{L^1_{t}(\dot{B}^{\frac{d}{2}}_{2,1})}+\|z\|_{L^1_{t}(\dot{B}^{\frac{d}{2}-1}_{2,1})}\lesssim \var.
    \end{aligned}
\end{equation}
Combining \eqref{deltae1}-\eqref{deltae3} together one obtains \eqref{rate} and thus the proof of Theorem \ref{Thm3} is complete.

\section{The one-dimensional case}\label{section5}

The purpose of this section is to adapt our computations to the one-dimensional case in a simple setting. We consider the one-dimensional Jin-Xin system 
\begin{equation} \label{JXD1d}
\left\{
    \begin{aligned}
&\frac{\partial} {\partial t}u+ \frac{\partial} {\partial x} v =0, \\
&\varepsilon^2\frac{\partial} {\partial t} v+\frac{\partial} {\partial x} u=-\big(v-f(u) \big),
    \end{aligned}
    \right.
\end{equation}
in the case that the unknowns $u,v$ are scalar functions and tend to the zero equilibrium state $\bar{u},\bar{v}=0$ for simplicity. The nonlinear function $f(u)$ is smooth in $u$ and satisfies $f(0)=f'(0)=0$. It is expected that as $\var\rightarrow 0$, the solution $(u,v)$ to System \eqref{JXD1d} converges to $(u^{*},v^{*})$ satisfying the equations
\begin{align}
&\frac{\partial} {\partial t}u^{*}-\frac{\partial^2} {\partial x^2} u^{*}-\frac{\partial} {\partial x}f(u^{*})=0,\label{uheat1d}  
\end{align}
and
\begin{align}
&v^{*}=-\frac{\partial} {\partial x} u^{*}+f(u^{*}).\label{Darcy1d}
\end{align}

We have the following uniform global existence and relaxation limit of solutions to System \eqref{JXD} with weaker regularities.

\begin{Thm}\label{Thm1d} Let $\bar{u}=\bar{v}=0$ and $\varepsilon\in(0,1)$, and assume \eqref{AssumptionA}-\eqref{Assumptionf}. The following statements hold:
\begin{itemize}
    \item Let the threshold $J_{\var}$ is given by \eqref{Jvar}. There exists a constant $\eta_2>0$ independent of $\var$ such that for initial data $(u_{0},v_{0})$ satisfying $(u_{0},v_{0})\in\dot{B}^{-\frac{1}{2}}_{2,\infty}\cap\dot{B}^{\frac{1}{2}}_{2,1}$ and
 \begin{equation}\label{small1d}
\|(u_{0},\var v_{0})\|_{\dot{B}^{-\frac{1}{2}}_{2,\infty}\cap\dot{B}^{\frac{1}{2}}_{2,1}}\leq \eta_2,
\end{equation} 
then System \eqref{JXD1d} associated to the initial data $(u_{0},v_{0})$ admits a unique global-in-time solution $(u,v)$ satisfying $(u,v)\in \mathcal{C}_{b}(\mathbb{R}_{+};\dot{B}^{-\frac{1}{2}}_{2,\infty}\cap\dot{B}^{\frac{1}{2}}_{2,1})$ and
\begin{equation}\label{Xt1d}
\begin{aligned}
 &\|(u,\var v)\|_{\widetilde{L}^{\infty}_{t}(\dot{B}^{-\frac{1}{2}}_{2,\infty}\cap\dot{B}^{\frac{1}{2}}_{2,1})}+\|u\|_{\widetilde{L}^1_{t}(\dot{B}^{\frac{3}{2}}_{2,\infty}\cap\dot{B}^{\frac{5}{2}}_{2,1})}^{\ell}+\frac{1}{\var^2}\|u\|_{L^1_{t}(\dot{B}^{\frac{1}{2}}_{2,1})}^{h}\\
   &\quad\quad+\|v\|_{\widetilde{L}^1_{t}(\dot{B}^{\frac{1}{2}}_{2,\infty}\cap \dot{B}^{\frac{3}{2}}_{2,1})}^{\ell}+\frac{1}{\var}\|v\|_{L^1_{t}(\dot{B}^{\frac{1}{2}}_{2,1})}^{h}+\frac{1}{\var}\|z\|_{\widetilde{L}^1_{t}(\dot{B}^{-\frac{1}{2}}_{2,\infty}\cap\dot{B}^{\frac{1}{2}}_{2,1})}^{\ell}+\frac{1}{\var^2}\|z\|_{L^1_{t}(\dot{B}^{-\frac{1}{2}}_{2,1})}^{h}\\
   &\quad \leq C\|(u_{0},\var v_{0})\|_{\dot{B}^{-\frac{1}{2}}_{2,\infty}\cap\dot{B}^{\frac{1}{2}}_{2,1}}\quad\hbox{for all } t\geq 0,
\end{aligned}
\end{equation}
where $z:=\frac{\partial} {\partial x} u+v-f(u)$, and $C>0$ is a constant independent of $\var$ and time.

    \item There exists a constant $\eta_1>0$ such that if $u_0^*\in \dot{B}^{-\frac{1}{2}}_{2,\infty}\cap\dot{B}^{\frac{1}{2}}_{2,1}$ and
\begin{equation}\label{smalllimit1d}
\|u^*\|_{\dot{B}^{-\frac {1}{2}}_{2,\infty}\cap\dot{B}^{\frac{1}{2}}_{2,1}}\leq \eta_{1},
\end{equation}
then System \eqref{uheat1d} associated to the initial data $u^*_0$
admit a unique global solution  $u^*\in \cC_b(\R^+;\dot{B}^{-\frac{1}{2}}_{2,\infty}\cap\dot{B}^{\frac{1}{2}}_{2,1})$. Let  $v^{*}$ be given by \eqref{Darcy1d}, then $(u^*,v^{*})$ satisfies
\begin{equation}\label{Xtlimit1d}
    \begin{aligned}
    &\|u^*\|_{\widetilde{L}^{\infty}_{t}(\dot{B}^{-\frac{1}{2}}_{2,\infty}\cap\dot{B}^{\frac{1}{2}}_{2,1})}+\|u^*\|_{\widetilde{L}^1_{t}(\dot{B}^{\frac{3}{2}}_{2,\infty})}+\|v^*\|_{\widetilde{L}^1_{t}(\dot{B}^{\frac{1}{2}}_{2,\infty})}\leq C\|u^*\|_{\dot{B}^{-\frac{1}{2}}_{2,\infty}\cap\dot{B}^{\frac{1}{2}}_{2,1}}\quad \hbox{for all } t\geq 0,
    \end{aligned}
\end{equation}
with $C>0$ a constant independent of time.

    \item Assume further that
\begin{align}\label{errorinitial1d}
    \|u_0-u_0^*\|_{B^{-\frac{3}{2}}_{2,\infty}\cap\dot{B}^{-\frac{1}{2}}_{2,\infty}}\leq\varepsilon.
\end{align}
Then for all $t>0$, the following convergence estimates hold:
\begin{equation}\label{rate1d}
\begin{aligned}
\|u-u^*\|_{\widetilde{L}^{\infty}_{t}(\dot{B}^{-\frac{3}{2}}_{2,\infty}\cap\dot{B}^{-\frac{1}{2}}_{2,\infty})}&+\|u-u^{*}\|_{\widetilde{L}^1_{t}(\dot{B}^{\frac{1}{2}}_{2,\infty})\cap\widetilde{L}^2_{t}(\dot{B}^{\frac{1}{2}}_{2,\infty})}+\|v-v^{*}\|_{\widetilde{L}^1_{t}(\dot{B}^{-\frac{1}{2}}_{2,\infty})}\leq C\varepsilon,
\end{aligned}
\end{equation}
where $C>0$ is a constant independent of $\var$ and time.

\end{itemize}

\end{Thm}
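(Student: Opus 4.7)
The plan is to mirror the three-step strategy of Theorems \ref{Thm1}, \ref{Thm2} and \ref{Thm3}, replacing the multi-dimensional critical space $\dot B^{d/2-1}_{2,1}\cap \dot B^{d/2}_{2,1}$ by its one-dimensional analogue $\dot B^{-1/2}_{2,\infty}\cap \dot B^{1/2}_{2,1}$. The index $-1/2$ with $\ell^\infty$-summation is dictated by the fact that in $1$d the parabolic limit equation \eqref{uheat1d} is critical at $\dot B^{-1/2}_{2,\infty}$, while the stronger endpoint $\dot B^{1/2}_{2,1}$ is kept in order to control $\|u\|_{L^\infty}$ via the embedding $\dot B^{1/2}_{2,1}\hookrightarrow L^\infty$ and thus handle the quadratic composition $f(u)=u\,g(u)$ with $g(0)=0$. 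I will keep the same threshold $J_\varepsilon$ as in \eqref{Jvar}, the same effective unknown $z=\partial_x u+v-f(u)$, and close a bootstrap functional $X(t)$ modeled on \eqref{apriori} but with $\ell^\infty$-summation in the low-frequency blocks of index $-1/2$.

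For the low-frequency analysis I will rewrite the equations as in \eqref{JXDZ}, giving a heat equation for $u$ with forcing $-\partial_x z+\partial_x f(u)$ and a damped equation for $z$. Applying the parabolic maximal regularity estimate \eqref{maximal1} in $\dot B^{-1/2}_{2,\infty}$ and $\dot B^{1/2}_{2,1}$ yields the $u$ bounds, while \eqref{maximal11} produces the damped bound on $z$; the coupling constants $\varepsilon 2^{J_\varepsilon}=2^{-k_0}$ are absorbed by choosing $k_0$ large as in \eqref{lowd2111}. The nonlinear term $f(u)=u\,g(u)$ with $g(0)=0$ is quadratic, so tame product laws of the form
\[
\|u\,g(u)\|_{\dot B^{-1/2}_{2,\infty}}\lesssim \|u\|_{\dot B^{1/2}_{2,1}}\|g(u)\|_{\dot B^{-1/2}_{2,\infty}},\qquad \|u\,g(u)\|_{\dot B^{1/2}_{2,1}}\lesssim \|u\|_{\dot B^{1/2}_{2,1}}\|g(u)\|_{\dot B^{1/2}_{2,1}},
\]
together with \eqref{q1}--\eqref{q2} (which extend to $\ell^\infty$-type Besov spaces because $g$ vanishes at $0$), give all the composition bounds in the required low-frequency norms, and, exactly as in \eqref{lowd212}--\eqref{lowd2152}, the bound on $w=\varepsilon v$ follows from the identity $w=\varepsilon z-\varepsilon\partial_x u+\varepsilon f(u)$.

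The high-frequency analysis is nearly unchanged: the Lyapunov functional $\mathcal L_j^2$ and dissipation $\mathcal H_j^2$ of Subsection \ref{subsectionhigh} are one-dimensional by construction, and Lemma \ref{Lyaeq} gives $\mathcal L_j^2\sim\|\dot\Delta_j(m,w)\|_{L^2}^2$ and $\mathcal H_j^2\gtrsim \varepsilon^{-2}\|\dot\Delta_j(m,w)\|_{L^2}^2$ for $j\geq J_\varepsilon-1$. Integrating in time and summing with $\ell^1$ weight in $\dot B^{1/2}_{2,1}$ reproduces \eqref{Highe}--\eqref{Highe2}; the composition estimate \eqref{q2} together with the quadratic structure of $f$ absorbs the source. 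Once $X(t)\lesssim X_0+X(t)^2$ is established with $X_0$ small, the existence and uniqueness statements follow via the same Friedrichs scheme and convergence argument used after \eqref{aprioriestimate}. For the limit system \eqref{uheat1d} I will apply \eqref{maximal1} directly in $\dot B^{-1/2}_{2,\infty}\cap \dot B^{1/2}_{2,1}$ and close by the same quadratic product estimate, obtaining \eqref{Xtlimit1d}.

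For the convergence rate \eqref{rate1d}, I will subtract the equations as in \eqref{diff1} for $\delta u=u-u^\ast$, $\delta v=v-v^\ast$. The heat equation for $\delta u$ is forced by $-\partial_x Z+\partial_x(f(u)-f(u^\ast))$ where, by \eqref{Xt1d}, $\varepsilon^{-1}\|Z\|_{L^1_t(\dot B^{-1/2}_{2,\infty})}$ is $O(1)$, so $\|\partial_x Z\|_{L^1_t(\dot B^{-3/2}_{2,\infty})}\lesssim \varepsilon$. Writing $f(u)-f(u^\ast)=\delta u\,g(u)+u^\ast(g(u)-g(u^\ast))$ as in \eqref{deltae11} and using the smallness of $\|u-\bar u\|$ and $\|u^\ast-\bar u\|$ in $\widetilde L^2_t(\dot B^{1/2}_{2,1})$, the nonlinear terms are absorbed by the left-hand side. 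The high-frequency contribution $\|\delta u\|^h$ is handled directly by Bernstein, as in \eqref{highdd}, using the $\varepsilon^{-2}$-weighted high-frequency bound of \eqref{Xt1d}. I expect the main obstacle to be precisely the low-frequency product estimates in the $\ell^\infty$-Besov setting: the standard $\dot B^{d/2-1}_{2,1}$ paraproduct argument used in Section \ref{section4} has to be replaced by a more careful paraproduct decomposition exploiting both the quadratic structure $f(0)=f'(0)=0$ and the $\dot B^{1/2}_{2,1}\hookrightarrow L^\infty$ endpoint, so that no logarithmic loss is produced in $\dot B^{-3/2}_{2,\infty}\cap\dot B^{-1/2}_{2,\infty}$. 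Once these estimates are in place, combining them delivers \eqref{rate1d} and completes the proof.
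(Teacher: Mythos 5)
Your overall architecture coincides with the paper's: same threshold \eqref{Jvar}, same effective unknown $z=\partial_x u+v-f(u)$, parabolic/damped maximal regularity in low frequencies, the unchanged Lyapunov functional of Subsection \ref{subsectionhigh} in high frequencies, and the product law \eqref{uv3} in $\dot B^{-1/2}_{2,\infty}$ to treat $\var f'(u)\partial_x v$ (this is exactly the paper's estimate \eqref{1dn2}). However, there is one concrete missing ingredient, and it is precisely the step the paper singles out as the key to closing the one-dimensional estimates. All of your quadratic bounds for the source $f(u)=u\,g(u)$ ultimately require one factor measured in $\widetilde L^2_t(\dot B^{1/2}_{2,1})$ (equivalently $L^2_t(L^\infty)$): e.g.\ $\|f(u)\|_{\widetilde L^1_t(\dot B^{1/2}_{2,\infty})}\lesssim \|u\|_{L^2_t(L^\infty)}\|u\|_{\widetilde L^2_t(\dot B^{1/2}_{2,\cdot})}$, and similarly for the high-frequency term $\frac1\var\|f(u)\|^h_{L^1_t(\dot B^{1/2}_{2,1})}$. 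But your low-frequency functional only carries $u^\ell$ in $\ell^\infty$-type norms at the regularities $-1/2$ (in $\widetilde L^\infty_t$) and $3/2$ (in $\widetilde L^1_t$), together with $\ell^1$-norms at levels $1/2$ and $5/2$; none of these yields $\|u^\ell\|_{\widetilde L^2_t(\dot B^{1/2}_{2,1})}$ by Bernstein (at low frequencies one cannot descend in regularity), interpolation of the $\ell^1$-level bounds only gives level $3/2$, and $\dot B^{1/2}_{2,\infty}\not\hookrightarrow L^\infty$ in $d=1$, so an $L^\infty_t(L^\infty)\times L^1_t$ splitting does not help either. The paper closes this loop with the summability-gaining interpolation \eqref{inter}, applied in \eqref{1d1d} to bound $\|u\|^{\ell}_{\widetilde L^2_t(\dot B^{1/2}_{2,1}\cap\dot B^{3/2}_{2,1})}$ by the geometric mean of the $\ell^\infty$-type norms at levels $-1/2,\,3/2$ and $1/2,\,5/2$; without this (or an equivalent device inserted into your functional $X(t)$), your bootstrap $X(t)\lesssim X_0+X(t)^2$ does not close as written.

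Two smaller remarks. First, your appeal to ``(q1)--(q2) extended to $\ell^\infty$-type spaces'' is unnecessary: the paper already provides \eqref{q5} for exactly this purpose, and your stated product laws are legitimate instances of \eqref{uv2}--\eqref{uv3}; the issue is only the availability of the $\ell^1$-summed factor discussed above. Second, for the convergence rate \eqref{rate1d} you flag a possible obstruction in the $\ell^\infty$-Besov paraproducts but do not resolve it; in fact no new paraproduct machinery is needed, since the decomposition $f(u)-f(u^*)=\delta u\, g(u)+u^*(g(u)-g(u^*))$ combined with \eqref{uv3} (one factor in $\dot B^{1/2}_{2,1}$, the other in $\dot B^{-1/2}_{2,\infty}$ or $\dot B^{-3/2}_{2,\infty}$) suffices, which is how the paper argues ``similarly'' to Section \ref{section4}; but again this uses the $\widetilde L^2_t(\dot B^{1/2}_{2,1})$-control of $u$ and $u^*$ supplied by the interpolation step you omitted.
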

\begin{Rmk}
Compared to the 1-d analysis done in \cite{CBD1} for the compressible Euler system with damping, here we need to work in a weaker regularity setting in order to control the quadratic term $f$. This leads to the consideration of the third index $r=\infty$ so as to be able to use product laws. Indeed, one may not apply such laws in $\dot{B}^{-\frac{1}{2}}_{2,1}$ but it is possible to do so in $\dot{B}^{-\frac{1}{2}}_{2,\infty}$.
\end{Rmk}
\vspace{2ex}

\noindent
\textbf{Proof of Theorem \ref{Thm1d}.} Let $(u,v)$ be a solution to System \eqref{JXD1d} subject to the initial data $(u_{0},v_{0})$. We show the uniform a-priori estimates of $(u,v)$. Arguing similarly as in \eqref{ud21}-\eqref{lowd211}, we have
\begin{equation}
\begin{aligned}
&\|u\|_{\widetilde{L}^{\infty}_{t}(\dot{B}^{-\frac{1}{2}}_{2,\infty})}^{\ell}+ \|u\|_{\widetilde{L}^1_{t}(\dot{B}^{\frac{3}{2}}_{2,\infty})}^{\ell}+\var\|z\|_{\widetilde{L}^{\infty}_{t}(\dot{B}^{-\frac{1}{2}}_{2,\infty})}^{\ell}+\frac{1}{\varepsilon}\|z\|_{\widetilde{L}^1_{t}(\dot{B}^{-\frac{1}{2}}_{2,\infty})}^{\ell}\\
&\quad\lesssim \|(u_{0},\var v_{0})\|_{\dot{B}^{-\frac{1}{2}}_{2,\infty}}+\|f(u)\|_{\widetilde{L}^1_{t}(\dot{B}^{\frac{1}{2}}_{2,\infty})}^{\ell}+\var \|f’(u)\frac{\partial}{\partial x}v\|_{\widetilde{L}^1_{t}(\dot{B}^{-\frac{1}{2}}_{2,\infty})}^{\ell}.
\end{aligned}
\end{equation}
Thence \eqref{1d1d} and the composition estimate \eqref{q5} imply that
\begin{equation}\label{1dn1}
\begin{aligned}
\|f(u)\|_{\widetilde{L}^1_{t}(\dot{B}^{\frac{1}{2}}_{2,\infty})}^{\ell}&\lesssim \|u\|_{L^2_{t}(L^{\infty})}(\|u\|_{\widetilde{L}^2_{t}(\dot{B}^{\frac{1}{2}}_{2,\infty})}^{\ell}+\|u\|_{\widetilde{L}^2_{t}(\dot{B}^{\frac{1}{2}}_{2,1})}^{h})\\
&\lesssim (\|u\|_{\widetilde{L}^2_{t}(\dot{B}^{\frac{1}{2}}_{2,1})}^{\ell}+\frac{1}{\var}\|u\|_{\widetilde{L}^2_{t}(\dot{B}^{\frac{1}{2}}_{2,1})}^{h})^2.
\end{aligned}
\end{equation}
The key is to make use of the product law \eqref{uv3} in the weaker Besov space $B^{-\frac{1}{2}}_{2,\infty}$:
\begin{equation}\label{1dn2}
\begin{aligned}
\var\|f’(u)\frac{\partial}{\partial x}v\|_{\widetilde{L}^1_{t}(\dot{B}^{-\frac{1}{2}}_{2,\infty})}^{\ell}&\lesssim \|f’(u)\|_{\widetilde{L}^{\infty}_{t}(\dot{B}^{\frac{1}{2}}_{2,1})}\|\frac{\partial}{\partial x}v\|_{\widetilde{L}^1_{t}(\dot{B}^{-\frac{1}{2}}_{2,\infty})}\\
&\lesssim \|u\|_{\widetilde{L}^{\infty}_{t}(\dot{B}^{\frac{1}{2}}_{2,1})}(\|v\|_{\widetilde{L}^1_{t}(\dot{B}^{\frac{1}{2}}_{2,\infty})}^{\ell}+\|v\|_{\widetilde{L}^1_{t}(\dot{B}^{\frac{1}{2}}_{2,1})}^{h}).
\end{aligned}
\end{equation}
As in \eqref{lowd2150}-\eqref{lowd2152}, we get
\begin{equation}\nonumber
\begin{aligned}
&\|(u,\var z)\|_{\widetilde{L}^{\infty}_{t}(\dot{B}^{\frac {1}{2}}_{2,1})}^\ell+\|u\|^\ell_{L^1_{t}(\dot{B}^{\frac{5}{2}}_{2,1})}+\frac{1}{\var}\|z\|^\ell_{L^1_{t}(\dot{B}^{\frac{3}{2}}_{2,1})}\\
&\quad\lesssim \|(u_{0},\var v_{0})\|_{\dot{B}^{\frac{1}{2}}_{2,1}}+\var\|  f(u)\|^\ell_{L^1_{t}(\dot{B}^{\frac{3}{2}}_{2,1})}+\var\|f’(u)\frac{\partial}{\partial x}v\|_{L^1_{t}(\dot{B}^{\frac{1}{2}}_{2,1})}^{\ell}.
\end{aligned}
\end{equation}
According to \eqref{1dn1}-\eqref{1dn2} and the low-frequency property
\begin{equation}\nonumber
\begin{aligned}
 \|g\|_{\dot{B}^{s}_{2,1}}^{\ell}\leq \sum_{j\leq J_{\var}} 2^{js'} \|g\|_{\dot{B}^{s-s'}_{2,\infty}}^{\ell}\lesssim 2^{J_{\var}s'}\|g\|_{\dot{B}^{s-s'}_{2,\infty}}^{\ell},
\end{aligned}
\end{equation}
there holds
\begin{equation}\nonumber
\left\{
\begin{aligned}
&\var \|  f(u)\|^\ell_{L^1_{t}(\dot{B}^{\frac{3}{2}}_{2,1})}\lesssim \|f(u)\|_{\widetilde{L}^1_{t}(\dot{B}^{\frac{1}{2}}_{2,\infty})}\lesssim (\|u\|_{\widetilde{L}^2_{t}(\dot{B}^{\frac{1}{2}}_{2,1})}^{\ell}+\frac{1}{\var}\|u\|_{\widetilde{L}^2_{t}(\dot{B}^{\frac{1}{2}}_{2,1})}^{h})^2,\\
&\var \|  f’(u) \frac{\partial}{\partial x}v\|^\ell_{L^1_{t}(\dot{B}^{\frac{1}{2}}_{2,1})}\lesssim \|  f’(u) \frac{\partial}{\partial x}v\|^\ell_{L^1_{t}(\dot{B}^{-\frac{1}{2}}_{2,\infty})}\lesssim\|u\|_{\widetilde{L}^{\infty}_{t}(\dot{B}^{\frac{1}{2}}_{2,1})}(\|v\|_{\widetilde{L}^1_{t}(\dot{B}^{\frac{1}{2}}_{2,\infty})}^{\ell}+\|v\|_{\widetilde{L}^1_{t}(\dot{B}^{\frac{1}{2}}_{2,1})}^{h}) .
\end{aligned}
\right.
\end{equation}
Thus, we obtain the low-frequency estimates
\begin{equation}\label{1dlow}
\begin{aligned}
&\|(u,\var v)\|_{\widetilde{L}^{\infty}_{t}(\dot{B}^{-\frac{1}{2}}_{2,\infty}\cap\dot{B}^{\frac{1}{2}}_{2,1})}^{\ell}+\|u\|_{\widetilde{L}^1_{t}(\dot{B}^{\frac{3}{2}}_{2,\infty}\cap\dot{B}^{\frac{5}{2}}_{2,1})}^{\ell}+\|v\|_{\widetilde{L}^1_{t}(\dot{B}^{\frac{1}{2}}_{2,\infty}\cap \dot{B}^{\frac{3}{2}}_{2,1})}^{\ell}+\frac{1}{\var}\|z\|_{\widetilde{L}^1_{t}(\dot{B}^{-\frac{1}{2}}_{2,\infty}\cap\dot{B}^{\frac{1}{2}}_{2,1})}^{\ell}\\
   &\quad\lesssim \|(u_{0},\var v_{0})\|_{\dot{B}^{-\frac{1}{2}}_{2,\infty}\cap\dot{B}^{\frac{1}{2}}_{2,1}}+(\|u\|_{\widetilde{L}^2_{t}(\dot{B}^{\frac{1}{2}}_{2,1})}^{\ell}+\frac{1}{\var}\|u\|_{\widetilde{L}^2_{t}(\dot{B}^{\frac{1}{2}}_{2,1})}^{h})^2\\
   &\quad\quad+\|u\|_{\widetilde{L}^{\infty}_{t}(\dot{B}^{\frac{1}{2}}_{2,1})}(\|v\|_{\widetilde{L}^1_{t}(\dot{B}^{\frac{1}{2}}_{2,\infty})}^{\ell}+\|v\|_{\widetilde{L}^1_{t}(\dot{B}^{\frac{1}{2}}_{2,1})}^{h}) .
\end{aligned}
\end{equation}

On the other hand, one deduces from \eqref{1d1d}, interpolation \eqref{inter} and the composition estimate \eqref{q2} that
\begin{equation}\nonumber
\begin{aligned}
\frac{1}{\varepsilon }\|f(u)\|_{L^1_t(B^{\frac{1}{2}}_{2,1})}^h&\lesssim \|u\|_{\widetilde{L}^{2}_{t}(\dot{B}^{\frac{1}{2}}_{2,1})}\big{(} \|u\|_{\widetilde{L}^2_{t}(\dot{B}^{\frac{3}{2}}_{2,1})}^{\ell}+\frac{1}{\var}\|u\|_{\widetilde{L}^2_{t}(\dot{B}^{\frac{1}{2}}_{2,1})}^{h}\big)\\
&\lesssim \big{(} \|u\|_{\widetilde{L}^2_{t}(\dot{B}^{\frac{1}{2}}_{2,1}\cap\dot{B}^{\frac{3}{2}}_{2,1})}^{\ell}+\frac{1}{\var}\|u\|_{\widetilde{L}^2_{t}(\dot{B}^{\frac{1}{2}}_{2,1})}^{h}\big)^2.
\end{aligned}
\end{equation}
Thus, similar arguments as in \eqref{JXD1}-\eqref{Highe2} also lead to the high-frequency estimates
\begin{equation}\label{1dhigh}
\begin{aligned}
&\|( u,\varepsilon  v)\|_{\widetilde{L}^{\infty}_{t}(\dot{B}^{\frac{1}{2}}_{2,1})}^{h}+\frac{1}{\varepsilon^2}\|(u,\var v)\|_{L^1_{t}(\dot{B}^{\frac{1}{2}}_{2,1})}^{h}+\frac{1}{\varepsilon}\|u\|_{\widetilde{L}^2_{t}(\dot{B}^{\frac{1}{2}}_{2,1})}^{h}+ \frac{1}{\varepsilon^2}\|z\|_{L^1_t(B^{-\frac{1}{2}}_{2,1})}^h\\
&\quad\lesssim \|( u_0,\varepsilon  v_0)\|_{\dot{B}^{\frac{1}{2}}_{2,1}}^{h}+\big{(} \|u\|_{\widetilde{L}^2_{t}(\dot{B}^{\frac{1}{2}}_{2,1}\cap\dot{B}^{\frac{3}{2}}_{2,1})}^{\ell}+\frac{1}{\var}\|u\|_{\widetilde{L}^2_{t}(\dot{B}^{\frac{1}{2}}_{2,1})}^{h}\big)^2.
\end{aligned}
\end{equation}

Finally, in order to enclose the uniform a-priori estimates, the $\widetilde{L}^2_{t}(\dot{B}^{\frac{1}{2}}_{2,1}\cap\dot{B}^{\frac{3}{2}}_{2,1})$-norm of $u^{\ell}$ is needed to  handle the nonlinear term $f(u)$. Indeed, by virtue of the refined interpolation \eqref{inter}, one has
\begin{equation}\label{1d1d}
\left\{
\begin{aligned}
\|u\|_{\widetilde{L}^2_{t}(\dot{B}^{\frac{1}{2}}_{2,1})}^{\ell}&\lesssim \|u^{\ell}\|_{\widetilde{L}^2_{t}(\dot{B}^{\frac{1}{2}}_{2,1})}+\|u^{h}\|_{\widetilde{L}^2_{t}(\dot{B}^{\frac{1}{2}}_{2,1})}^{\ell}\\
&\lesssim \big( \|u\|_{\widetilde{L}^{\infty}_{t}(\dot{B}^{-\frac{1}{2}}_{2,\infty})}^{\ell}\big)^{\frac{1}{2}}\big( \|u\|_{\widetilde{L}^{1}_{t}(\dot{B}^{\frac{3}{2}}_{2,\infty})}^{\ell}\big)^{\frac{1}{2}}+\|u\|_{\widetilde{L}^2_{t}(\dot{B}^{\frac{1}{2}}_{2,1})}^{h},\\
\|u\|_{\widetilde{L}^2_{t}(\dot{B}^{\frac{3}{2}}_{2,1})}^{\ell}&\lesssim \|u^{\ell}\|_{\widetilde{L}^2_{t}(\dot{B}^{\frac{3}{2}}_{2,1})}+\|u^{h}\|_{\widetilde{L}^2_{t}(\dot{B}^{\frac{3}{2}}_{2,1})}^{\ell}\\
&\lesssim \big( \|u\|_{\widetilde{L}^{\infty}_{t}(\dot{B}^{\frac{1}{2}}_{2,\infty})}^{\ell}\big)^{\frac{1}{2}}\big( \|u\|_{\widetilde{L}^{1}_{t}(\dot{B}^{\frac{5}{2}}_{2,\infty})}^{\ell}\big)^{\frac{1}{2}}+\frac{1}{\var}\|u\|_{\widetilde{L}^2_{t}(\dot{B}^{\frac{1}{2}}_{2,1})}^{h}.
\end{aligned}
\right.
\end{equation}
The combination of \eqref{1dlow}-\eqref{1dhigh} gives rise to the uniform a-priori estimates of $(u,v)$. Therefore, we are able to prove the global existence of solutions to the Cauchy problem. The global existence of solutions of System \eqref{uheat} and the convergence estimates \eqref{rate1d} can be shown similarly as  in the previous sections. The details are omitted.

\section{Appendix}\label{section6}

\vspace{3mm}
\subsection{Some properties of Besov spaces}
This subsection is devoted to recalling some properties of Besov spaces which are used in this paper. 
\begin{Lemma}\cite{bahouri1}
The following statements hold:
\begin{itemize}
\item
The interpolation property is satisfied for $1\leq p\leq\infty$, $s_{1}<s_{2}$ and $\theta\in(0,1)$:
\begin{equation}
\begin{aligned}
&\|u\|_{\dot{B}^{\theta s_{1}+(1-\theta)s_{2}}_{p,1}}\lesssim \frac{1}{\theta(1-\theta)(s_{2}-s_{1})}\|u\|_{\dot{B}^{ s_{1}}_{p,\infty}}^{\theta}\|u\|_{\dot{B}^{s_{2}}_{p,\infty}}^{1-\theta}.\label{inter}
\end{aligned}
\end{equation}
\item{}
Let $s_{1}$, $s_{2}$ and $p$ satisfy $2\leq p\leq \infty$, $s_{1}\leq \frac{d}{p}$, $s_{2}\leq \frac{d}{p}$ and $s_{1}+s_{2}>0$. Then we have
\begin{equation}\label{uv2}
\begin{aligned}
&\|uv\|_{\dot{B}^{s_{1}+s_{2}-\frac{d}{p}}_{p,1}}\lesssim \|u\|_{\dot{B}^{s_{1}}_{p,1}}\|v\|_{\dot{B}^{s_{2}}_{p,1}}.
\end{aligned}
\end{equation}
\item{}
 Assume that $s_{1}$, $s_{2}$ and $p$ satisfy $2\leq p\leq \infty$, $s_{1}\leq \frac{d}{p}$, $s_{2}<\frac{d}{p}$ and $s_{1}+s_{2}\geq0$. Then it holds 
\begin{equation}\label{uv3}
\begin{aligned}
&\|uv\|_{\dot{B}^{s_{1}+s_{2}-\frac{d}{p}}_{p,\infty}}\lesssim \|u\|_{\dot{B}^{s_{1}}_{p,1}}\|v\|_{\dot{B}^{s_{2}}_{p,\infty}}.
\end{aligned}
\end{equation}
\end{itemize}
\end{Lemma}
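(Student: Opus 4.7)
All three statements are classical Besov-space inequalities proved via Littlewood--Paley analysis; I would combine two standard tools: dyadic splitting with optimization for \eqref{inter}, and Bony's homogeneous paradifferential decomposition for \eqref{uv2} and \eqref{uv3}.

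For \eqref{inter}, set $\sigma := \theta s_1 + (1-\theta)s_2$. Writing $\|u\|_{\dot{B}^{\sigma}_{p,1}}$ as the sum $\sum_{j\in\mathbb{Z}} 2^{j\sigma}\|\dot{\Delta}_j u\|_{L^p}$, I would split at an integer $N$ and bound $\|\dot{\Delta}_j u\|_{L^p}$ by $2^{-j s_1}\|u\|_{\dot{B}^{s_1}_{p,\infty}}$ for $j \le N$ and by $2^{-j s_2}\|u\|_{\dot{B}^{s_2}_{p,\infty}}$ for $j > N$. Summing the two geometric series yields
\begin{equation*}
\|u\|_{\dot{B}^{\sigma}_{p,1}} \lesssim \frac{2^{N(1-\theta)(s_2-s_1)}}{1-2^{-(1-\theta)(s_2-s_1)}}\|u\|_{\dot{B}^{s_1}_{p,\infty}} + \frac{2^{-N\theta(s_2-s_1)}}{1-2^{-\theta(s_2-s_1)}}\|u\|_{\dot{B}^{s_2}_{p,\infty}}.
\end{equation*}
Choosing $N\in\mathbb{Z}$ closest to $\frac{1}{s_2-s_1}\log_2\!\big(\|u\|_{\dot{B}^{s_2}_{p,\infty}}/\|u\|_{\dot{B}^{s_1}_{p,\infty}}\big)$ balances the two terms and produces the geometric mean $\|u\|_{\dot{B}^{s_1}_{p,\infty}}^{\theta}\|u\|_{\dot{B}^{s_2}_{p,\infty}}^{1-\theta}$; the prefactor $\tfrac{1}{\theta(1-\theta)(s_2-s_1)}$ arises from the elementary bound $1-2^{-x}\ge (\log 2)x/2$ applied to $x = \theta(s_2-s_1)$ and $x = (1-\theta)(s_2-s_1)$.

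For \eqref{uv2} and \eqref{uv3}, I would use Bony's homogeneous paradifferential decomposition
\begin{equation*}
uv = \dot{T}_u v + \dot{T}_v u + \dot{R}(u,v), \qquad \dot{T}_u v := \sum_{j}\dot{S}_{j-1} u\,\dot{\Delta}_j v, \qquad \dot{R}(u,v):=\sum_{|j-k|\le 1}\dot{\Delta}_j u\,\dot{\Delta}_k v,
\end{equation*}
and estimate each piece separately in the target space $\dot{B}^{s_1+s_2-d/p}_{p,r}$ (with $r=1$ for \eqref{uv2} and $r=\infty$ for \eqref{uv3}). For the paraproducts, since $\dot{\Delta}_k(\dot{S}_{j-1}u\,\dot{\Delta}_j v)$ is nonzero only when $k\sim j$, H\"older and Bernstein give $\|\dot{\Delta}_k(\dot{T}_u v)\|_{L^p}\lesssim \|\dot{S}_{k-1}u\|_{L^\infty}\|\dot{\Delta}_k v\|_{L^p}\lesssim\bigl(\sum_{j'\le k-2}2^{j'(d/p-s_1)}\cdot 2^{j's_1}\|\dot{\Delta}_{j'}u\|_{L^p}\bigr)\|\dot{\Delta}_k v\|_{L^p}$; the $j'$-sum is summable thanks to $s_1\le d/p$, and a discrete Young convolution in $k$ closes the estimate. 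The symmetric term $\dot{T}_v u$ is analogous, but for \eqref{uv3} it is precisely here that the strict inequality $s_2 < d/p$ is needed, because the $\ell^\infty$ control on $v$ leaves no slack for an endpoint summation. For the remainder, since $p\ge 2$, Bernstein applied to $\dot{\Delta}_k$ inflates the $L^{p/2}$ norm of the product $\dot{\Delta}_j u\,\tilde{\dot{\Delta}}_j v$ to $L^p$ with an extra factor $2^{kd/p}$, which combined with the restriction $j\ge k-2$ produces the kernel $2^{(k-j)(s_1+s_2)}\mathbf{1}_{j\ge k-2}$; this kernel is $\ell^1$ precisely when $s_1+s_2 > 0$ (the endpoint $s_1+s_2=0$ is allowed in \eqref{uv3} because one only needs $\ell^\infty$ summability in $k$), and discrete Young then yields the claimed bounds.

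The main subtleties are twofold. First, one must verify that $\dot{T}_u v$, $\dot{T}_v u$ and $\dot{R}(u,v)$ genuinely belong to $\mathcal{S}'_h$ rather than merely to $\mathcal{S}'/\mathcal{P}$; the hypotheses $s_i\le d/p$ and $s_1+s_2>0$ (or $\ge 0$) are exactly what guarantees convergence of the low-frequency tails in a suitable $L^\infty$ sense, ruling out polynomial ambiguities. Second is the $\ell^r$ bookkeeping distinguishing \eqref{uv2} and \eqref{uv3}: in \eqref{uv2} the symmetric chain $\ell^1\ast\ell^1\hookrightarrow\ell^1$ handles all three pieces under the single condition $s_1+s_2>0$, whereas in \eqref{uv3} the asymmetric chain $\ell^1\ast\ell^\infty\hookrightarrow\ell^\infty$ permits the endpoint $s_1+s_2=0$ in the remainder but demands the strict $s_2<d/p$ in $\dot{T}_v u$, where $v$'s weaker third index must be compensated by true geometric decay in $j'$ rather than mere boundedness. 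Once these bookkeeping issues are handled, the three inequalities follow by routine summation of geometric series arising from Bernstein's lemma.
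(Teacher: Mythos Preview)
Your proof sketch is correct and is precisely the standard argument, but note that the paper does not actually prove this lemma: it is stated with the citation \cite{bahouri1} and no proof is given. Your approach---dyadic splitting with optimization of the cut for \eqref{inter}, and Bony's paraproduct decomposition with Bernstein and discrete Young for \eqref{uv2}--\eqref{uv3}---is exactly the route taken in that reference, so there is nothing to compare against in the present paper itself.
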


\begin{Lemma}\cite{bahouri1}\label{Composition}
Let $d\geq1$, $1\leq p,r\leq \infty$, $s\in\mathbb{R}$, and $f(u)$ be smooth in $u$. Then, for every real-valued function $u\in \mathcal{S}(\mathbb{R}^{d})+\bar{u}$ for some constant $\bar{u}>0$, it holds that
\begin{equation}\label{F1}
\begin{aligned}
\norme{f(u)-f(\bar{u})}_{B^{s}_{p,r}}\leq C_{u}\norme{u-\bar{u}}_{B^{s}_{p,r}},
\end{aligned}
\end{equation}
where $C_{u}>0$ denotes a constant dependent of $\|u\|_{L^{\infty}}$, $f''$, $s$, $p$, $r$ and $d$.

Furthermore, if $f'(\bar{u})=0$, then for all $u,v\in\mathcal{S}(\mathbb{R}^{d})+\bar{u}$, it holds that
\begin{equation}\label{F2}
\|f(u)-f(v)\|_{\dot{B}^s_{p,1}} \leq C\Bigl(\|u-v\|_{L^\infty}\|(u-\bar{u},v-\bar{u})\|_{\dot{B}^s_{p,1}} + 
\|u-v\|_{\dot{B}^s_{p,1}} \|(u-\bar{u},v-\bar{u})\|_{L^\infty}\Bigr)\cdotp
\end{equation}
\end{Lemma}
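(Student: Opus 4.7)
The plan is to derive both estimates from Bony's paraproduct decomposition applied after a first-order Taylor expansion of $f$, so that the problem reduces to combining product laws with Meyer's classical composition theorem on Besov spaces.

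\textbf{Step 1 (proof of \eqref{F1}).} Set $w:=u-\bar u$ and use the Taylor identity
\[
f(u)-f(\bar u)=w\cdot g(w),\qquad g(y):=\int_0^1 f'(\bar u+\tau y)\,d\tau,
\]
where $g$ is smooth and $g(0)=f'(\bar u)$. Since $u\in L^\infty$, each function $g^{(k)}(w)$ is bounded in $L^\infty$ by a constant $C_u$ depending only on $\|u\|_{L^\infty}$ and $\sup_{|y|\leq\|u\|_{L^\infty}}|f^{(k+1)}(y)|$. Next apply Bony's decomposition
\[
w\,g(w)=T_w\,g(w)+T_{g(w)}\,w+R\bigl(w,g(w)\bigr).
\]
The middle term is estimated directly by the standard paraproduct bound
\[
\|T_{g(w)}w\|_{B^s_{p,r}}\lesssim \|g(w)\|_{L^\infty}\|w\|_{B^s_{p,r}}\leq C_u\|w\|_{B^s_{p,r}}.
\]
For the other two terms I first invoke Meyer's composition theorem (Bahouri--Chemin--Danchin, Theorem~2.61) applied to the smooth bounded function $g$ to get $\|g(w)\|_{B^s_{p,r}}\leq C_u\|w\|_{B^s_{p,r}}$, then combine with the paraproduct/remainder bounds $\|T_w a\|_{B^s_{p,r}}+\|R(w,a)\|_{B^s_{p,r}}\lesssim \|w\|_{L^\infty}\|a\|_{B^s_{p,r}}$. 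Together these give \eqref{F1}.

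\textbf{Step 2 (proof of \eqref{F2}).} Here the idea is to exploit $f'(\bar u)=0$ to produce a genuinely quadratic structure. Writing
\[
f(u)-f(v)=(u-v)\int_0^1 f'\bigl(v+\tau(u-v)\bigr)\,d\tau,
\]
and expanding the integrand around $\bar u$ using $f'(\bar u)=0$,
\[
f'\bigl(v+\tau(u-v)\bigr)=\bigl(\tau(u-\bar u)+(1-\tau)(v-\bar u)\bigr)\int_0^1 f''\!\bigl(\bar u+\sigma(v+\tau(u-v)-\bar u)\bigr)\,d\sigma,
\]
I reach, after integrating and denoting the inner smooth bounded factor by $K(u,v)$,
\[
f(u)-f(v)=(u-v)\cdot\bigl[\theta(u-\bar u)+(1-\theta)(v-\bar u)\bigr]\cdot K(u,v),
\]
with $K\in L^\infty$ bounded by a constant depending on $\|(u,v)\|_{L^\infty}$. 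This is a triple product in which two of the three factors are small. Applying the tame product inequality
\[
\|ab\|_{\dot B^s_{p,1}}\lesssim \|a\|_{L^\infty}\|b\|_{\dot B^s_{p,1}}+\|a\|_{\dot B^s_{p,1}}\|b\|_{L^\infty}
\]
(a consequence of Bony's decomposition) twice, and distributing the Besov norm either onto $u-v$ or onto the pair $\{u-\bar u,v-\bar u\}$, yields exactly the two contributions on the right-hand side of \eqref{F2}. The multiplier $K(u,v)$ is absorbed into the constant by applying the composition estimate \eqref{F1} of Step~1 (with $f$ replaced by a suitable smooth truncation of $f''$) together with the fact that multiplication by an $L^\infty$ function with bounded Besov seminorm is continuous on $\dot B^s_{p,1}$.

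\textbf{Main obstacle.} The delicate point is Step~1: Meyer's theorem and the paraproduct bound $\|T_w a\|_{B^s_{p,r}}\lesssim \|w\|_{L^\infty}\|a\|_{B^s_{p,r}}$ are most transparent for $s>0$, whereas the statement allows arbitrary $s\in\mathbb R$. For $s\leq 0$ one has to replace the direct Meyer step by a block-by-block telescoping argument of the form $f(w)=\sum_{k}\bigl(f(\dot S_{k+1}w)-f(\dot S_k w)\bigr)$ together with Bernstein inequalities, or alternatively by a duality argument pairing against $\dot B^{-s}_{p',r'}$. Once \eqref{F1} is secured in this range, Step~2 follows without any further technical difficulty, since the additional $L^\infty$ smallness provided by $f'(\bar u)=0$ is precisely what the tame product rule needs.
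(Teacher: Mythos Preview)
The paper does not prove this lemma at all; it is stated as a citation from Bahouri--Chemin--Danchin, so there is no proof in the paper to compare your proposal against.

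Regarding your proposal on its own merits, Step~1 is essentially circular. You invoke Meyer's composition theorem (BCD, Theorem~2.61) on $g$ to bound $\|g(w)\|_{B^s_{p,r}}$, but that theorem applied directly to $h(y):=f(\bar u+y)-f(\bar u)$ (which satisfies $h(0)=0$) already yields \eqref{F1} in one line for $s>0$; the detour through the product $w\cdot g(w)$ adds nothing. Moreover, your remainder bound $\|R(w,a)\|_{B^s_{p,r}}\lesssim\|w\|_{L^\infty}\|a\|_{B^s_{p,r}}$ also requires $s>0$, so the paraproduct route does not extend the range of validity. You are right that $s\le 0$ is the real obstruction; in fact the standard reference only gives \eqref{F1} for $s>0$, and the telescoping argument you sketch is precisely the proof of Meyer's theorem, which again needs $s>0$. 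The lemma as stated for arbitrary $s\in\mathbb R$ is stronger than what the cited reference proves, but every application in the paper has $s>0$, so this generality is not actually needed.

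In Step~2 the structural idea is correct, but the treatment of $K(u,v)$ is loose: $K$ sits inside a double integral in $(\tau,\sigma)$ and depends on both $u$ and $v$, so \eqref{F1} does not apply to it directly. A cleaner execution is to write $f(u)-f(\bar u)=(u-\bar u)\tilde g(u-\bar u)$ with $\tilde g(0)=0$, subtract the analogous identity for $v$, and apply the tame product rule to each piece together with the one-variable composition estimate for $\tilde g$ and its difference $\tilde g(u-\bar u)-\tilde g(v-\bar u)$.
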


The following lemma can be shown as in \cite{c2} and plays a key role in our nonlinear analysis.

\begin{Lemma}\cite{c2}\label{compositionl2} Let $d\geq1$, $s>0$, $J_{\varepsilon}$ be given by \eqref{Jvar}, and $f$ be any smooth function such that $f'(\bar{u})$ vanishes for some constant $\bar{u}>0$. Then for every real-valued function $u\in \mathcal{S}(\mathbb{R}^{d})+\bar{u}$, there holds
\begin{align}
&\|f(u)-f(\bar{u})\|_{\dot{B}^{s}_{2,1}}^{\ell}\leq C_{u} \|u-\bar{u}\|_{L^{\infty}}\big{(} \|u-\bar{u}\|_{\dot{B}^{s}_{2,1}}^{\ell}+\varepsilon^{\sigma-s}\|u-\bar{u}\|_{\dot{B}^{\sigma}_{2,1}}^{h}\big{)},\quad\quad\sigma\geq0,\label{q1}\\
&\|f(u)-f(\bar{u})\|_{\dot{B}^{s}_{2,1}}^{h}\leq C_{u}\|u-\bar{u}\|_{L^{\infty}}\big{(} \varepsilon^{\sigma-s}\|u-\bar{u}\|_{\dot{B}^{\sigma}_{2,1}}^{\ell}+\|u-\bar{u}\|_{\dot{B}^{s}_{2,1}}^{h}\big{)},\quad\quad \sigma\in\mathbb{R},\label{q2}
\end{align}
where $C_{u}>0$ denotes a constant dependent of $\|u\|_{L^{\infty}}$, $f''$, $s$, $\sigma$ and $d$.

Furthermore, for any $s\geq-\frac{d}{2}$, the following estimate holds{\rm:}
\begin{align}
&\|f(u)-f(\bar{u})\|_{\dot{B}^{s}_{2,\infty}}^{\ell}\leq C_{u}\|u-\bar{u}\|_{\dot{B}^{\frac{d}{2}}_{2,1}}\big{(} \|u-\bar{u}\|_{\dot{B}^{s}_{2,\infty}}^{\ell}+\varepsilon^{\frac{d}{2}-s}\|u-\bar{u}\|_{\dot{B}^{\frac{d}{2}}_{2,1}}^{h}\big{)}.\label{q5}
\end{align}
\end{Lemma}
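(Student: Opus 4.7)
The first step is to exploit the assumption $f'(\bar{u})=0$ via a Taylor expansion with integral remainder: setting $m:=u-\bar{u}$, one writes $f(u)-f(\bar{u})=m\,g(u)$ with $g(u):=\int_0^1 f'(\bar{u}+\tau m)\,d\tau$, so that $g(\bar{u})=0$. This identity is precisely the source of the $\|m\|_{L^{\infty}}$ prefactor in \eqref{q1}-\eqref{q2} (and, via the embedding $\dot{B}^{d/2}_{2,1}\hookrightarrow L^{\infty}$, of the $\|m\|_{\dot{B}^{d/2}_{2,1}}$ prefactor in \eqref{q5}). The standard composition estimate \eqref{F1} applied to the \emph{smooth} function $g$ then gives $\|g(u)\|_{\dot{B}^s_{2,1}}\leq C_u\|m\|_{\dot{B}^s_{2,1}}$ and $\|g(u)\|_{L^{\infty}}\leq C_u\|m\|_{L^{\infty}}$.

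Next, I would apply Bony's paraproduct decomposition
$m\,g(u)=T_m g(u)+T_{g(u)}m+R(m,g(u))$
and estimate each piece after splitting $m=m^{\ell}+m^{h}$ at the threshold $J_{\var}$. The paraproducts $T_m g(u)$ and $T_{g(u)}m$ are spectrally localized around the dyadic blocks of their second argument, so $(T_m g(u))^{\ell}$ is essentially $T_m(g(u)^{\ell})$ modulo finitely many boundary blocks near $j=J_{\var}$, and the classical paraproduct estimate $\|T_a b\|_{\dot{B}^s_{2,1}}\lesssim \|a\|_{L^{\infty}}\|b\|_{\dot{B}^s_{2,1}}$ bounds this piece by $\|m\|_{L^{\infty}}\|g(u)\|_{\dot{B}^s_{2,1}}^{\ell}$. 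Combined with the $(\ell,h)$-splitting of $g(u)$ controlled iteratively by $m$, this produces the first term on the right-hand side of \eqref{q1}.

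The loss factor $\var^{\sigma-s}$ then arises from the elementary frequency-transfer inequalities \eqref{lhl}: when the remainder $R(m,g(u))$ projected to low frequencies receives contributions from close pairs of dyadic blocks at scales above $J_{\var}$, one invokes $\|m\|_{\dot{B}^s_{2,1}}^{h}\leq 2^{-(J_{\var}-1)(\sigma-s)}\|m\|_{\dot{B}^{\sigma}_{2,1}}^{h}\sim \var^{\sigma-s}\|m\|_{\dot{B}^{\sigma}_{2,1}}^{h}$ for $\sigma>s$, while for $\sigma\leq s$ the factor $\var^{\sigma-s}\geq 1$ is large but the inequality still holds (as a weaker statement). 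The proof of \eqref{q2} is symmetric, swapping the roles of $\ell$ and $h$ and applying \eqref{lhl} in the opposite direction. For \eqref{q5}, the substitution of $\|m\|_{L^{\infty}}$ by $\|m\|_{\dot{B}^{d/2}_{2,1}}$, together with the weaker product law \eqref{uv3} which accommodates the third Besov index $r=\infty$, gives the analogous statement in $\dot{B}^s_{2,\infty}$.

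The main technical obstacle lies in the careful treatment of the remainder $R(m,g(u))=\sum_{|j-k|\leq 1}\ddj m\cdot\ddk g(u)$, whose low-frequency projection receives contributions from close dyadic pairs at \emph{all} scales. The bookkeeping at the threshold $J_{\var}$ --- deciding whether a given pair is ``low-low'', ``mixed'', or ``high-high'' and applying the corresponding Bernstein gain $2^{(j_0-j)d/2}$ --- is exactly what causes the power of $\var$ to drop out correctly from \eqref{lhl} and yields a single constant $C_u$ depending only on $\|u\|_{L^{\infty}}$, $f''$, $s$, $\sigma$ and $d$. A second Taylor iteration, $f(u)-f(\bar{u})=m^2 h(u)$ with $h$ smooth, can be used here to make the full quadratic structure explicit and avoid any circular use of a composition estimate on $g(u)$ itself.
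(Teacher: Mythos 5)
First, note that the paper does not actually prove Lemma \ref{compositionl2}: it is quoted from \cite{c2} (``can be shown as in \cite{c2}''), so your proposal can only be compared with the expected argument there. Your overall skeleton --- factor $f(u)-f(\bar u)=m\,g(u)$ with $g(\bar u)=0$ to produce the $\|m\|_{L^\infty}$ prefactor, then Bony decomposition with bookkeeping at the threshold $J_\varepsilon$ and the transfer inequalities \eqref{lhl} --- is in the right spirit. However, there is a concrete error at the step that generates the factor $\varepsilon^{\sigma-s}$. You claim that for $\sigma\le s$ the estimate \eqref{q1} ``still holds as a weaker statement'' because $\varepsilon^{\sigma-s}\ge 1$. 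This is false: on high frequencies ($2^j\gtrsim \varepsilon^{-1}$) one has, by \eqref{lhl}, $\varepsilon^{\sigma-s}\|m\|^{h}_{\dot{B}^{\sigma}_{2,1}}\lesssim \|m\|^{h}_{\dot{B}^{s}_{2,1}}$ when $\sigma<s$, and the reverse inequality fails badly for blocks $j\gg J_\varepsilon$. Hence for $\sigma<s$ the right-hand side of \eqref{q1} is \emph{stronger} than the unrestricted bound $\|m\|_{L^\infty}\|m\|_{\dot{B}^{s}_{2,1}}$, and it cannot be obtained by discarding the low-frequency projection on the left. This is precisely the case the paper uses (e.g.\ after \eqref{lowd2150}, with $s=\frac d2+1$, $\sigma=\frac d2$, i.e.\ the factor $\varepsilon^{-1}$). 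The correct mechanism is the one you only gesture at: in the remainder and in the boundary paraproduct blocks, the low-frequency output projection forces $j\le J_\varepsilon$ while the offending input blocks sit at $k\ge J_\varepsilon-O(1)$; summing $2^{js}$ over $j\le J_\varepsilon$ (this is where $s>0$ enters) against $2^{-k\sigma}$ over $k\ge J_\varepsilon-O(1)$ (this is where $\sigma\ge0$ enters) produces exactly $2^{J_\varepsilon(s-\sigma)}\sim\varepsilon^{\sigma-s}$, for \emph{any} $\sigma\ge0$, whether larger or smaller than $s$. As written, your argument does not prove \eqref{q1} in the regime actually needed, and the symmetric issue affects \eqref{q2}.

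A second gap is the treatment of $g(u)$. Your paraproduct estimates require the split norms $\|g(u)\|^{\ell}_{\dot{B}^{s}_{2,1}}$ and $\|g(u)\|^{h}_{\dot{B}^{\sigma}_{2,1}}$ to be controlled by the corresponding split norms of $m$ with the same $\varepsilon$-weights; this is itself a frequency-restricted composition estimate (for a function \emph{not} vanishing at $\bar u$, without the $L^\infty$ prefactor), which does not follow from \eqref{F1} and is nontrivial for exactly the same reason as above when $\sigma<s$. The ``second Taylor iteration'' $f(u)-f(\bar u)=m^2h(u)$ does not break this circularity: after splitting $h(u)=h(\bar u)+(h(u)-h(\bar u))$ you are again left needing restricted Besov norms of $h(u)-h(\bar u)$, i.e.\ another instance of the same lemma. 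The standard way out (and, presumably, the route of \cite{c2}) is Meyer's first linearization, $f(u)-f(\bar u)=\sum_k \dot\Delta_k m\int_0^1 f'(\bar u+\dot S_km+\tau\dot\Delta_km)\,d\tau$, estimated block by block with the threshold bookkeeping, using $f'(\bar u)=0$ to extract $\|m\|_{L^\infty}$ from each factor $\int_0^1 f'(\cdot)\,d\tau$; this avoids ever needing restricted Besov norms of a composed function. I would rewrite the proof along these lines, and in the process make explicit where $s>0$, $\sigma\ge0$ (and, for \eqref{q5}, $s\ge-\frac d2$ together with the $L^1\to L^2$ Bernstein gain $2^{jd/2}$ and the sup in $j$ allowed by the index $r=\infty$) are used.
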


\subsection{Regularity estimates of linear parabolic and damped equations}
We consider the following Cauchy problem of the linear parabolic equation:
\begin{equation}
\left\{
\begin{aligned}
&\partial_t u- \sum_{i=1}^{d}\frac{\partial}{\partial x_{i}}(A_{i} \frac{\partial}{\partial x_{i}}  u) =F,\quad x\in\mathbb{R}^{d},\quad t>0,\\
&u(0, x)=u_0(x),\quad\quad\quad\quad\quad\quad\quad~~ x\in\mathbb{R}^{d},
\end{aligned}
\right.\label{Heat}
\end{equation}
where the unknown is $u=u(x,t)\in\mathbb{R}^{n}$, and the constant matrices $A_{i}$ are given by \eqref{AssumptionA}.

\begin{Lemma}\label{maximalheat}
Let $d,n\geq1$, $s\in\mathbb{R}$, $1\leq r\leq\infty$, $T>0$ be given time, and assume $u_{0}\in\dot{B}^{s}_{2,r}$.

{\rm(1)} If  $F\in  \widetilde{L}^1(0,T;\dot{B}^{s}_{2,r})$, and $u$ is the solution to the Cauchy problem \eqref{Heat}, then for $t\in(0,T)$, $u$ satisfies
\begin{equation}\label{maximal1}
\begin{aligned}
&\|u\|_{\widetilde{L}^{\infty}_{t}(\dot{B}^{s}_{2,r})}+\|u\|_{\widetilde{L}^1_{t}(\dot{B}^{s+2}_{2,r})}\leq C(\|u_{0}\|_{\dot{B}^{s}_{2,r}}+\|f\|_{\widetilde{L}^1_{t}(\dot{B}^{s}_{2,r})}),
\end{aligned}
\end{equation}
where $C>0$ is a constant independent of $T$.

{\rm(2)} If  $F=F_{1}+F_{2}$ with $F_{1}\in  \widetilde{L}^1(0,T;\dot{B}^{s}_{2,r})$ and $F_{2}\in  \widetilde{L}^2(0,T;\dot{B}^{s-1}_{2,r})$, and $u$ is the solution to the Cauchy problem \eqref{Heat}, then for $t\in(0,T)$, $u$ satisfies
\begin{equation}\label{maximal2}
\begin{aligned}
&\|u\|_{\widetilde{L}^{\infty}_{t}(\dot{B}^{s}_{2,r})}+\|u\|_{\widetilde{L}^2_{t}(\dot{B}^{s+1}_{2,r})}\leq C(\|u_{0}\|_{\dot{B}^{s}_{2,r}}+\|F_{1}\|_{\widetilde{L}^1_{t}(\dot{B}^{s}_{2,r})}+\|F_{2}\|_{\widetilde{L}^2_{t}(\dot{B}^{s-1}_{2,r})}).
\end{aligned}
\end{equation}
\end{Lemma}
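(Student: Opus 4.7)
Since $A_i=a_i I_n$ with $a_i>0$, the system decouples into $n$ independent scalar heat equations with symbol $-\sum_{i=1}^d a_i|\xi_i|^2$, so without loss of generality I treat the scalar case. My plan is to work at each dyadic scale and then re-sum using the Chemin-Lerner convention. Applying $\dot\Delta_j$ to \eqref{Heat}, taking the $L^2$ inner product with $\dot\Delta_j u$, integrating by parts and using Bernstein's inequality (which gives $\sum_i a_i\|\partial_{x_i}\dot\Delta_j u\|_{L^2}^2\geq c_0\,2^{2j}\|\dot\Delta_j u\|_{L^2}^2$ for some $c_0>0$ depending only on the $a_i$), I obtain
\begin{equation}\nonumber
\tfrac{1}{2}\tfrac{d}{dt}\|\dot\Delta_j u\|_{L^2}^2+c_0\,2^{2j}\|\dot\Delta_j u\|_{L^2}^2\leq \|\dot\Delta_j F\|_{L^2}\|\dot\Delta_j u\|_{L^2}.
\end{equation}

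For part (1), I divide the above by $(\|\dot\Delta_j u\|_{L^2}^2+\eta)^{1/2}$ and send $\eta\to 0^+$ (the standard trick to avoid differentiating the absolute value at zero), giving
\begin{equation}\nonumber
\tfrac{d}{dt}\|\dot\Delta_j u\|_{L^2}+c_0\,2^{2j}\|\dot\Delta_j u\|_{L^2}\leq \|\dot\Delta_j F\|_{L^2}.
\end{equation}
Integrating in time on $[0,t]$ yields
\begin{equation}\nonumber
\|\dot\Delta_j u\|_{L^\infty_t(L^2)}+c_0\,2^{2j}\|\dot\Delta_j u\|_{L^1_t(L^2)}\leq \|\dot\Delta_j u_0\|_{L^2}+\|\dot\Delta_j F\|_{L^1_t(L^2)}.
\end{equation}
Multiplying by $2^{js}$ and taking the $\ell^r(\Z)$ norm in $j$ directly gives \eqref{maximal1}; the Chemin-Lerner definition is tailored precisely so that this re-summation is legitimate.

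For part (2), I handle the contribution of $F_1$ via the estimate just established, so it suffices to treat $F=F_2$. Starting again from the $L^2$ energy identity and applying Young's inequality in the form $\|\dot\Delta_j F_2\|_{L^2}\|\dot\Delta_j u\|_{L^2}\leq \tfrac{1}{2c_0\,2^{2j}}\|\dot\Delta_j F_2\|_{L^2}^2+\tfrac{c_0\,2^{2j}}{2}\|\dot\Delta_j u\|_{L^2}^2$, I absorb the last term and obtain
\begin{equation}\nonumber
\tfrac{d}{dt}\|\dot\Delta_j u\|_{L^2}^2+c_0\,2^{2j}\|\dot\Delta_j u\|_{L^2}^2\leq \tfrac{1}{c_0\,2^{2j}}\|\dot\Delta_j F_2\|_{L^2}^2.
\end{equation}
Integrating in time, taking the square root and using $\sqrt{a+b}\leq\sqrt{a}+\sqrt{b}$, I arrive at
\begin{equation}\nonumber
\|\dot\Delta_j u\|_{L^\infty_t(L^2)}+\sqrt{c_0}\,2^j\|\dot\Delta_j u\|_{L^2_t(L^2)}\lesssim \|\dot\Delta_j u_0\|_{L^2}+\tfrac{1}{\sqrt{c_0}}\,2^{-j}\|\dot\Delta_j F_2\|_{L^2_t(L^2)}.
\end{equation}
Multiplying by $2^{js}$ and summing with the $\ell^r$ weight produces \eqref{maximal2} once combined with the bound for $F_1$.

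The proof is essentially routine; the only mild care required is the passage from the quadratic energy inequality to the linear one (for part 1) and the correct use of Young's inequality with the frequency-dependent weight $2^{2j}$ (for part 2), so as to gain exactly one derivative for $F_2$ instead of two. No genuine obstacle is expected since the parabolic symbol $\sum_i a_i|\xi_i|^2$ is uniformly elliptic.
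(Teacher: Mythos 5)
Your proof is correct and rests on the same dyadic $L^2$ energy estimate as the paper; part (1) is essentially identical (divide by $(\|\dot{\Delta}_j u\|_{L^2}^2+\eta)^{1/2}$, let $\eta\to0$, integrate in time, and re-sum in Chemin--Lerner form). For part (2) you take a mildly different route: the paper keeps both $F_1$ and $F_2$ in the integrated quadratic inequality \eqref{871} and absorbs them only after summation, via Young's inequality applied to the summed estimate \eqref{7444}, whereas you split off the $F_1$-contribution by superposition and absorb the $F_2$-term blockwise, at the level of the differential inequality, using Young's inequality with the frequency-dependent weight $2^{2j}$; both are equally valid and yield the same constants up to inessential factors. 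The only point you leave implicit is that reducing the $F_1$-part to estimate \eqref{maximal1} requires the interpolation $\widetilde{L}^\infty_t(\dot{B}^{s}_{2,r})\cap\widetilde{L}^1_t(\dot{B}^{s+2}_{2,r})\hookrightarrow\widetilde{L}^2_t(\dot{B}^{s+1}_{2,r})$ (blockwise Cauchy--Schwarz in time followed by H\"older in the $\ell^r$ sum, in the spirit of \eqref{inter}); this is standard and used elsewhere in the paper, but it should be stated to make the reduction complete.
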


\begin{proof}
First, we obtain after taking the $L^2$-inner product of $\eqref{Heat}_{1}$ with $\ddj u$  that
\begin{align}
\frac{d}{dt} \|\ddj u\|_{L^2}^2+\sum_{i=1}^{d}a_{i}\|\frac{\partial}{\partial x_{i}} \ddj u\|^2 \leq \|\ddj u\|_{L^2} \,\|\ddj F\|_{L^2}.\label{871}
\end{align}
Dividing the two sides of \eqref{871} by $(\|\ddj u\|_{L^2}^2+\eta)^{\frac{1}{2}}$, integrating the resulting equation over $[0,t]$, and then taking the limit as $\eta\rightarrow0$, we have
\begin{align*}
\|\ddj u(t)\|_{L^{\infty}_{t}(L^2)}+\min_{1\leq i\leq m}a_{i}2^{2j}\int_0^t\|\ddj u\| d\tau\lesssim \|\ddj u_{0}\|_{L^2}+\int_0^t\|\ddj F\|_{L^2}d\tau,
\end{align*}
which yields
\begin{equation}\nonumber
\begin{aligned}
&\|u\|_{\widetilde{L}^{\infty}_{t}(\dot{B}^{s}_{2,r})}+\min_{1\leq i\leq m}a_{i}\|u\|_{\widetilde{L}^1_{t}(\dot{B}^{s+2}_{2,r})}\lesssim \|u_{0}\|_{\dot{B}^{s}_{2,r}}+\|F\|_{\widetilde{L}^1_{t}(\dot{B}^{s}_{2,r})}.
\end{aligned}
\end{equation}
In accordance with the above estimates, we prove \eqref{maximal1}.

Next, we are going to show \eqref{maximal2}. Integrating \eqref{871} over $[0,t]$, we obtain
\begin{equation}\nonumber
\begin{aligned}
&\|\ddj u\|_{L^{\infty}_{t}(L^2)}^2+\min_{1\leq i\leq m}a_{i} 2^{2j}\int_0^t\|\ddj u\|^2 d\tau\\
&\quad\lesssim \|\ddj u_{0}\|_{L^2}^2+\|\ddj F_{1}\|_{L^2_{t}(L^2)} \|\ddj u\|_{L^{2}_{t}(L^2)}+\|\ddj F_{2}\|_{L^2_{t}(L^2)} \|\ddj u\|_{L^{2}_{t}(L^2)}.
\end{aligned}
\end{equation}
This implies that
\begin{equation}\label{7444}
\begin{aligned}
&\|u\|_{\widetilde{L}^{\infty}_{t}(\dot{B}^{s}_{2,r})}+\min_{1\leq i\leq m}a_{i}\|u\|_{\widetilde{L}^2_{t}(\dot{B}^{s+1}_{2,r})}\\
&\quad\lesssim \|u_{0}\|_{\dot{B}^{s}_{2,r}}+\|u\|_{\widetilde{L}^{\infty}_{t}(\dot{B}^{s}_{2,r})}^{\frac{1}{2}} \|F_{1}\|_{\widetilde{L}^{1}_{t}(\dot{B}^{s}_{2,r})}^{\frac{1}{2}}+\|u\|_{\widetilde{L}^{2}_{t}(\dot{B}^{s+1}_{2,r})}^{\frac{1}{2}} \|F_{2}\|_{\widetilde{L}^{2}_{t}(\dot{B}^{s-1}_{2,r})}^{\frac{1}{2}}.
\end{aligned}
\end{equation}
By \eqref{7444} and the Young inequality, \eqref{maximal2} follows.
\end{proof}

Finally, we consider for the Cauchy problem of the damped equation
\begin{equation}
\left\{
\begin{aligned}
&\partial_t u+a_{0} u =F,\quad\quad~ x\in\mathbb{R}^{d},\quad t>0,\\
&u(0, x)=u_0(x),\quad x\in\mathbb{R}^{d},
\end{aligned}
\right.\label{damped}
\end{equation}
Similarly, we are able to show the following uniform estimates of \eqref{damped}.
\begin{Lemma}\label{maximaldamped}
Let $d,n\geq1$, $s\in\mathbb{R}$, $1\leq r\leq\infty$, $T>0$ be given time, and $a_{0}>0$ be a positive constant. Assume $u_{0}\in\dot{B}^{s}_{2,r}$ and $F\in  \widetilde{L}^1(0,T;\dot{B}^{s}_{2,r})$. If $u$ is the solution to the Cauchy problem \eqref{damped}, then for $t\in(0,T)$, $u$ satisfies
\begin{equation}\label{maximal11}
\begin{aligned}
&\|u\|_{\widetilde{L}^{\infty}_{t}(\dot{B}^{s}_{2,r})}+a_{0}\|u\|_{\widetilde{L}^1_{t}(\dot{B}^{s}_{2,r})}\leq C(\|u_{0}\|_{\dot{B}^{s}_{2,r}}+\|F\|_{\widetilde{L}^1_{t}(\dot{B}^{s}_{2,r})}),
\end{aligned}
\end{equation}
where $C>0$ is a constant independent of $a_{0}$ and $T$.
\end{Lemma}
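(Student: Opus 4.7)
The plan is to mimic the proof of Lemma~\ref{maximalheat} which appears just above in the appendix, since the Cauchy problem \eqref{damped} has the same structure as \eqref{Heat} but with a zero-order damping term $a_0 u$ in place of the second-order diffusion $\sum_i \partial_{x_i}(A_i \partial_{x_i} u)$. The argument is therefore essentially a Littlewood-Paley dyadic energy estimate followed by reassembly in the Chemin-Lerner norm.

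First, I would apply the homogeneous dyadic block $\dot{\Delta}_j$ to \eqref{damped}; since $\dot{\Delta}_j$ commutes with the zero-order operator $a_0\,\mathrm{Id}$ and with $\partial_t$, this yields the localized ODE $\partial_t \dot{\Delta}_j u + a_0 \dot{\Delta}_j u = \dot{\Delta}_j F$. Taking the $L^2$-inner product with $\dot{\Delta}_j u$ and using Cauchy-Schwarz on the right-hand side gives
\[
\frac{1}{2}\frac{d}{dt}\|\dot{\Delta}_j u\|_{L^2}^2 + a_0\|\dot{\Delta}_j u\|_{L^2}^2 \leq \|\dot{\Delta}_j u\|_{L^2}\,\|\dot{\Delta}_j F\|_{L^2}.
\]
Then, following the same regularization trick used in \eqref{871}, I divide both sides by $(\|\dot{\Delta}_j u\|_{L^2}^2+\eta)^{1/2}$, integrate over $[0,t]$, and let $\eta\to 0^+$ to obtain the pointwise-in-frequency inequality
\[
\|\dot{\Delta}_j u(t)\|_{L^2} + a_0\int_0^t \|\dot{\Delta}_j u(\tau)\|_{L^2}\,d\tau \leq \|\dot{\Delta}_j u_0\|_{L^2} + \int_0^t \|\dot{\Delta}_j F(\tau)\|_{L^2}\,d\tau.
\]

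Finally, I would take the supremum over $\tau\in[0,t]$ on the first term, multiply by $2^{js}$, and take the $\ell^r(\mathbb{Z})$-norm in $j$. By the definition of the Chemin-Lerner spaces $\widetilde{L}^\infty_t(\dot{B}^s_{2,r})$ and $\widetilde{L}^1_t(\dot{B}^s_{2,r})$, this immediately produces the desired estimate \eqref{maximal11}, with a constant $C$ arising only from the elementary inequality $\sup_t(a+b)\leq \sup_t a + \sup_t b$ applied inside the $\ell^r$-norm, hence independent of both $a_0$ and $T$.

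There is essentially no obstacle here: the proof is a direct transcription of the heat-equation argument with the dissipative term $\min_i a_i\, 2^{2j}\|\dot{\Delta}_j u\|_{L^2}^2$ replaced by the frequency-independent damping $a_0\|\dot{\Delta}_j u\|_{L^2}^2$. The only point worth noting is that the independence of $C$ from $a_0$ is crucial for the applications (the damping coefficient $\frac{1}{\varepsilon^2}$ is arbitrarily large as $\varepsilon\to 0$), and this independence is automatic because $a_0$ multiplies exactly the dissipation norm on the left-hand side while the constant $1$ multiplies the remaining terms.
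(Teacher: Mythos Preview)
Your proposal is correct and is precisely the approach the paper intends: the paper does not write out a proof for this lemma but simply remarks ``Similarly, we are able to show the following uniform estimates,'' referring back to the proof of Lemma~\ref{maximalheat}. Your argument is exactly that transcription, with the frequency-dependent dissipation $\min_i a_i\,2^{2j}\|\dot\Delta_j u\|_{L^2}^2$ replaced by the zero-order damping $a_0\|\dot\Delta_j u\|_{L^2}^2$, and your remark on the $a_0$-independence of $C$ is apt.
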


\vspace{2ex}
\textbf{Acknowledgments}
TCB is partially  supported by the European Research Council (ERC) under the European Union's Horizon 2020 research and innovation programme (grant agreement NO: 694126-DyCon).  

  \bigbreak
\textbf{Conflicts of interest.}
 On behalf of all authors, the corresponding author states that there is no conflict of interest. 
 
\bigbreak
\textbf{Data availability statement.}
 Data sharing not applicable to this article as no data sets were generated or analysed during the current study.

\end{document}